\providecommand\@dotsep{5}
\def\listtodoname{List of Todos}
\def\listoftodos{\@starttoc{tdo}\listtodoname}
\numberwithin{equation}{section}
\newtheorem{theorem}{Theorem}[section]
\newtheorem{lemma}[theorem]{Lemma}
\newtheorem{claim}[theorem]{Claim}
\newtheorem{remark}{Remark}
\newtheorem{definition}{Definition}[section]
\begin{document}

	\title[Existence of solution for a class of heat equation ...]
	{Existence of solution for a class of heat equation involving the 1-Laplacian operator }
	\author{Claudianor O. Alves and Tahir Boudjeriou}
	\address[Claudianor O. Alves ]
	{\newline\indent Unidade Acad\^emica de Matem\'atica
		\newline\indent 
		Universidade Federal de Campina Grande 
		\newline\indent
		e-mail:coalves@mat.ufcg.edu.br
		\newline\indent	
		58429-970, Campina Grande - PB, Brazil} 
	
	\address[Tahir Boudjeriou]
	{\newline\indent  
		Department of Mathematics
		\newline\indent Faculty of Exact Sciences
		\newline\indent Lab. of Applied Mathematics
		\newline\indent University of Bejaia, Bejaia, 6000, Algeria 
		\newline\indent
		e-mail:re.tahar@yahoo.com}

	\pretolerance10000
	
	\begin{abstract}
		This paper concerns the existence of global solutions for the following class of heat equation involving the 1-Laplacian operator of the Dirichlet problem 
		$$
		\left\{	
		\begin{array}{llc}
			u_{t}-\Delta_1 u=f(u) & \text{in}\ & \Omega\times (0, +\infty) , \\
			u =0 & \text{in} & \partial\Omega\times (0, +\infty), \\
			u(x,0)=u_{0}(x)& \text{in} &\Omega , 
		\end{array}\right.
		\leqno{(P)}
		$$
		where  $\Omega \subset \mathbb{R}^N$ is a smooth bounded domain, $N \geq 1$ and $f:\mathbb{R}\to \mathbb{R}$ is a continuous function satisfying some technical conditions,  and $\Delta_1 u=\text{div}\left(\frac{Du}{|Du|}\right)$ denotes the 1-Laplacian operator.  The existence of global solution is done by using an approximation technique that consists in working with a class of $p$-Laplacian problem associated with $(P)$ and then taking the limit when $ p \to 1^+$ to get our results.
	\end{abstract}
	\thanks{C.O. Alves was partially supported by CNPq/Brazil  304804/2017-7.}
	\subjclass[]{35K65,35K55,65N30} 
	\keywords{Degenerate parabolic equations, Nonlinear parabolic equations, Galerkin Methods}
	\maketitle	
	\section{Introduction and the main results} 
	
	In this paper, we are concerned with the existence of global solutions for the following class of heat equation involving the 1-Laplacian operator of the Dirichlet problem 
	\begin{equation}\label{P1}\left\{
		\begin{array}{llc}
			u_{t}-\Delta_1 u=f(u) & \text{in}\ & \Omega\times (0, +\infty) , \\
			u =0 & \text{in} & \partial\Omega\times (0, +\infty), \\
			u(x,0)=u_{0}(x)& \text{in} &\Omega , 
		\end{array}\right.
	\end{equation}
	where  $\Omega \subset \mathbb{R}^N$ is a smooth bounded domain, $N \geq 1$ and $f:\mathbb{R}\to \mathbb{R}$ is a continuous function and $\Delta_1 u=\text{div}\left(\frac{Du}{|Du|}\right)$ denotes the 1-Laplacian operator.
	
	In recent decade, the operator $\Delta_1$ has received a special attention after the definition of solution introduced by Attouch, Buttazzo and Michaille \cite{Andreu}, where many authors have analyzed the existence of solution for  problem (\ref{P1}) in the stationary case, that is, for the following class of problems 
	\begin{equation}\label{P12}\left\{
		\begin{array}{llc}
			-\text{div}\left(\frac{Du}{|Du|}\right)=f(u) & \text{in}\ & \Omega , \\
			u =0 & \text{in} & \partial\Omega. \\
		\end{array}\right.
	\end{equation}

	Accurately, Degiovanni and Magrone \cite{DegiovanniMagrone} studied the version of Br\'ezis-Nirenberg
	problem to the (\ref{P12}), by applying Linking theorem, the functional energy has been extended in Lebesgue space $L^{1^*}(\Omega)$, in order to recover the compactness embedding. 
	Concerning the spectrum of the 1-Laplacian operator, by using the same approach, Chang \cite{Chang2} proves the existence of a sequence of eigenvalues.
	
	Different approaches have been taken to attack (\ref{P12}) under various hypotheses on the nonlinearity $f$. In \cite{FigueiredoPimenta1}, Figueiredo and Pimenta studied a related problem to (\ref{P12}), where the nonlinearity has a subcritical growth and their main results establishes the existence of a nontrivial ground-state solution.
	
	As regards quasilinear problems, depending on some features of the differential
	operator to be considered, it is worth to work with it in suitable space like the space of
	functions of bounded variation, $BV(\Omega).$ We may address the question of finding
	critical points of a functional in the space of functions of bounded variation, $BV (\Omega),$ in a setting
	in which coerciveness and smoothness are lost. In other words, the main difficulties arise mainly to the lack of smoothness on the energy functional associated to (\ref{P12}) and the lack of reflexiveness on $BV(\Omega).$  Indeed, the energy functional is not $C^1$ and we find some hindrances to show that functionals
	defined in this space satisfy compactness conditions like the Palais-Smale. Meanwhile, a lot of attention has
	been paid recently to that space, for example see \cite{AlvesPimenta,Abm,BCN,Chang2,Demengel1,DegiovanniMagrone,FigueiredoPimenta3,Kawohl,sergio,Pimenta2} and references therein, since it is the natural environment
	in which minimizers of many problems can be found, especially in problems
	involving interesting physical situations, in capillarity theory and existence of minimal surfaces and as application of variational approach to image
	restoration.
	
	However, related to the evolution problem (\ref{P1}) we have few references. In \cite{Andreu}, Andreu, Ballester, Caselles and Maz\'on studied the existence of solution for the following class of evolution equations 
	\begin{equation}\label{P13}\left\{
		\begin{array}{llc}
			u_{t}-\text{div}\left(\frac{Du}{|Du|}\right)=0 & \text{in}\ & \Omega\times (0, +\infty) , \\
			u =\varphi & \text{in} & \partial\Omega\times (0, +\infty), \\
			u(x,0)=u_{0}(x)& \text{in} &\Omega , 
		\end{array}\right.
	\end{equation}
	where $u_0 \in L^{1}(\Omega)$ and $\varphi \in L^{\infty}(\Omega)$. By using the techniques of completely accretive
	operators and the Crandall-Liggett semigroup generation theorem \cite{Crandall}, the authors were able to prove  the existence and uniqueness of entropy solution. The same problem, but involving the Neumann boundary condition, that is  $\frac{\partial u}{\partial \nu}=0$ in  $\partial\Omega\times (0, +\infty)$, it was considered in Andreu, Ballester, Caselles, and Maz\'on \cite{Andreu3}. Still related to the existence of solution to problem (\ref{P13}), we would like to cite a paper by Hardt and Zhou \cite{HardtZhou}, where the authors used 
	an approximation technique, which consists  in working with a class of nondegenerate parabolic approximation problem, and after some estimates, they were able to prove the existence of solution for the original problem. In \cite{Andreu2}, Andreu, Caselles, D\'iaz and Maz\'on studied the asymptotic profile of solutions to (\ref{P13}) near the extinction time for Dirichlet and Neumann boundary conditions. 
	
	The literature concerning the inhomogeneous case of problem (\ref{P13}) is poor and, to our best knowledge, there a few papers in which the authors studied the existence and uniqueness of solutions. In that direction we  mention a result by Le\'on and Webler \cite{SW}, on the following problem 
	\begin{equation}\label{P14}\left\{
		\begin{array}{llc}
			u_{t}-\text{div}\left(\frac{Du}{|Du|}\right)= f(x,t)& \text{in}\ & \Omega\times (0, +\infty) , \\
			u =0 & \text{in} & \partial\Omega\times (0, +\infty), \\
			u(x,0)=u_{0}(x)& \text{in} &\Omega , 
		\end{array}\right.
	\end{equation}
	where  $u_{0} \in L^{2}(\Omega)$ and $f\in L_{loc}^{1}(0, +\infty;L^{2}(\Omega) )$ . In that work the authors proved global existence and uniqueness of solutions for (\ref{P14}) via a parabolic $p$-Laplacian problem and then taking the limit $p\rightarrow 1^{+}$.  In\cite{DH1}, by means of nonlinear semigroup, Hauer and Maz\'on  studied the existence of strong  solutions to problem (\ref{P14}) with a global Lipschitz continuous function $f(x,u)$ in the second variable instead of $f(x,t)$.  Finally, for some recent results on parabolic equations involving the fractional 1-Laplacian operator, we refer the reader to \cite{DH1} and \cite{DH2}.
	
	Motivated by studied make in \cite{DH1, Andreu,Andreu2,Andreu3,HardtZhou,SW}, we intend to prove two existence results of solution for (\ref{P1}) by supposing different conditions on $f$ and $\Omega$. Our first result is devoted to the radial case, where we assume that $\Omega$ is an annulus  region of the form  
	\begin{equation} \label{omega}
		\Omega=\{x\in \mathbb{R}^N\;:\;a<|x|<b\},
	\end{equation}
	where $0<a<b<+\infty$, while $f:\mathbb{R}\to \mathbb{R}$ is a continuous function satisfying  the following conditions:
	
	\noindent There is $p_0 \in (1,2)$ such that 
	$$
	\lim_{t \to 0} \frac{f(t)}{|t|^{p_0-1}}=0. \leqno{(f_1)}
	$$
	There is $\theta>1$ such that
	$$
	0<\theta F(t) \leq f(t)t, \quad t \in \mathbb{R}\setminus \{0\}. \leqno{(f_2)}
	$$
	
	In order to state our condition on the initial data $u_0$, we need to fix some notations. Hereafter, let us denote by $W_{0,rad}^{1,p}(\Omega)$ and $L_{rad}^{2}(\Omega)$ the subspaces of $W^{1,p}_{0}(\Omega)$ and $L^{2}(\Omega)$ respectively that are formed by radial functions.  It is well known that the embedding below
	\begin{equation} \label{EMBW1p}
		W_{0,rad}^{1,p}(\Omega) \hookrightarrow C(\overline{\Omega})
	\end{equation}
	is compact, whose proof is an immediate consequence of \cite[Chapitre 6, Lemme 1.1]{Kavian}. In particular, we have the compact embedding
	$$
	W_{0,rad}^{1,p}(\Omega) \hookrightarrow L^{q}(\Omega), \quad \forall q \in [1,\infty].
	$$
	The above embedding permits to consider the functional $E:W^{1,1}_{0,rad}(\Omega) \to \mathbb{R}$ given by 
	$$
	E(u)=\int_{\Omega}|\nabla u|\,dx-\int_{\Omega}F(u)\,dx.
	$$
	Associated with $E$ we have the Nehari set defined by 
	$$
	\mathcal{N}_{rad}=\left\{u \in W^{1,1}_{0,rad}(\Omega) \setminus \{0\}\,:\, \int_{\Omega}|\nabla u|\,dx=\int_{\Omega}f(u)u\,dx\right\},
	$$
	and the real number
	$$
	d=\inf_{u \in \mathcal{N}_{rad}}E(u).
	$$
	The potential well associated with problem $\eqref{P1}$ is the set
	\begin{equation}\label{pt7}
		W_{rad} =\left\{u\in  W^{1,1}_{0,rad}(\Omega)\,:\, E(u)< d\; \mbox{and} \; I(u)>0 \right\}\cup \{0\},
	\end{equation}
	where $I(u)=\displaystyle \int_{\Omega}|\nabla u|\,dx-\int_{\Omega}f(u)u\,dx$ for all $u \in W^{1,1}_{0,rad}(\Omega).$
	
	In the sequel, we give the following two definitions.
	\begin{definition} (radial solution)\label{Deff1} A function $u\in L^{\infty}(0, T; BV_{rad}(\Omega))$ will be called a radial weak solution of (\ref{P1}) if $u_{t}\in L^{2}(0, T;L_{rad}^{2}(\Omega))$,  and there exists a vector field $z\in L^{\infty}(\Omega \times(0,T), \mathbb{R}^{N})$ with $\text{div}\, z(t) \in L^{2}(\Omega)$ and $\|z(t)\|_{\infty}\leq 1$ such that
		\begin{enumerate}
			\item $u_{t}(t)-\text{div}(z(t))=f(u(t)), \;\;\text{in}\;\mathcal{D}'(\Omega) \;\;\text{a.e.}\;t\in [0,T],$
			\item $ \int_{\Omega} (z(t), Du(t))	=\int_{\Omega}|Du(t)|,$
			\item $[z(t), \nu]\in \text{sign}(-u(t))\;\;\mathcal{H}^{N-1}-\text{a.e.}\;\;\text{on}\;\partial \Omega.$
		\end{enumerate}
	\end{definition}
	\begin{definition} \label{Def1} (Maximal existence time)\label{de21} Let $u(t)$ be a solution of Problem $\eqref{P1}$. We define the maximal existence time $T_{\max}$ of $u$ as follows :
		$$ T_{\max}=\sup\{t>0\, : u=u(t)\;\text{  exists on }\, [0, T]\}.$$

		\begin{enumerate}
			\item If $T_{\max}<\infty$ we say that the solution of (\ref{P1}) blows up and $T_{\max}$ is the blow up time.
			\item If $T_{\max}=\infty$, we say that the solution is global.
		\end{enumerate}
	\end{definition}
	Now we are in position to state the first result.
	\begin{theorem}\label{TH1} Assume (\ref{omega}), $(f_1)-(f_2)$ and $u_{0} \in W_{0,rad}^{1,p_{0}}(\Omega) \cap W_{rad}$, where $p_0$ was given in $(f_1)$. Then there exists a global weak radial solution to problem (\ref{P1}). Furthermore, there holds 
		\begin{equation}\label{ENRR1}
			\int_{0}^{t}\|u_{s}(s)\|^{2}_{2}\,ds+E(u(t))\leq E(u_{0})\;\;\text{a.e.}\; t\in [0, +\infty).
		\end{equation}
	\end{theorem}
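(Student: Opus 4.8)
The plan is to obtain a solution of \eqref{P1} as the limit, as $p\to 1^{+}$, of solutions of the regularized quasilinear parabolic problems
\begin{equation*}
	\partial_{t}u_{p}-\Delta_{p}u_{p}=f(u_{p})\ \text{in}\ \Omega\times(0,+\infty),\qquad u_{p}=0\ \text{on}\ \partial\Omega,\qquad u_{p}(\cdot,0)=u_{0},
\end{equation*}
where $\Delta_{p}u=\operatorname{div}(|\nabla u|^{p-2}\nabla u)$ and $p\in(1,p_{0})$; note $u_{0}\in W^{1,p_{0}}_{0,rad}(\Omega)\subset W^{1,p}_{0,rad}(\Omega)$, so the data are admissible. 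For each fixed $p$ I would first construct a global radial solution $u_{p}$ by a Galerkin scheme in $W^{1,p}_{0,rad}(\Omega)$, using the compact embedding \eqref{EMBW1p} to pass to the limit in $f(u_{p})$ in the finite-dimensional approximations. To rule out finite-time blow-up I would introduce the $p$-energy $E_{p}(u)=\frac1p\int_{\Omega}|\nabla u|^{p}\,dx-\int_{\Omega}F(u)\,dx$, the functional $I_{p}(u)=\int_{\Omega}|\nabla u|^{p}\,dx-\int_{\Omega}f(u)u\,dx$, the associated Nehari set and depth $d_{p}$, and the well $W_{p}=\{E_{p}<d_{p},\,I_{p}>0\}\cup\{0\}$, then prove that $W_{p}$ is invariant under the flow. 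The point $u_{0}\in W_{rad}$ forces $u_{0}\in W_{p}$ for $p$ close to $1$, by continuity of $E_{p}(u_{0})$, $I_{p}(u_{0})$, $d_{p}$ in $p$; combined with $(f_{2})$ this keeps $u_{p}(t)\in W_{p}$ for all $t$ and yields $T_{\max}=\infty$.

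The next step is to derive a priori bounds \emph{uniform in $p$ and $t$}. The confinement in $W_{p}$ together with $(f_{2})$ gives a uniform upper bound for $E_{p}(u_{p}(t))$ and, through the Ambrosetti--Rabinowitz inequality, for $\frac1p\int_{\Omega}|\nabla u_{p}(t)|^{p}\,dx$. The energy identity $\int_{0}^{t}\|\partial_{s}u_{p}\|_{2}^{2}\,ds+E_{p}(u_{p}(t))=E_{p}(u_{0})$, obtained by testing the equation with $\partial_{t}u_{p}$, then bounds $\partial_{t}u_{p}$ in $L^{2}(0,T;L^{2}_{rad}(\Omega))$ uniformly. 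Finally, Hölder's inequality $\int_{\Omega}|\nabla u_{p}|\,dx\le|\Omega|^{(p-1)/p}\bigl(\int_{\Omega}|\nabla u_{p}|^{p}\,dx\bigr)^{1/p}$ converts the $W^{1,p}$ bound into a uniform bound of $u_{p}$ in $L^{\infty}(0,T;BV_{rad}(\Omega))$.

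With these estimates I would extract a subsequence so that $u_{p}\rightharpoonup u$ weakly-$*$ in $L^{\infty}(0,T;BV_{rad}(\Omega))$, $\partial_{t}u_{p}\rightharpoonup u_{t}$ in $L^{2}(0,T;L^{2}_{rad}(\Omega))$, and, by an Aubin--Lions argument, $u_{p}\to u$ strongly enough to guarantee $f(u_{p})\to f(u)$. For the diffusion term, set $z_{p}=|\nabla u_{p}|^{p-2}\nabla u_{p}$. For fixed $q<\infty$ and $p$ near $1$, Hölder gives $\|z_{p}\|_{L^{q}}\le|\Omega|^{1/q-(p-1)/p}\bigl(\int_{\Omega}|\nabla u_{p}|^{p}\bigr)^{(p-1)/p}$, which is uniformly bounded; a diagonal extraction yields $z_{p}\rightharpoonup z$ with $\|z\|_{L^{q}}\le|\Omega|^{1/q}$ for every $q$, whence $\|z\|_{\infty}\le 1$ on letting $q\to\infty$. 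Passing to the limit in the weak formulation produces $u_{t}-\operatorname{div}z=f(u)$ in $\mathcal{D}'(\Omega)$ a.e.\ $t$, i.e.\ condition (1) of Definition \ref{Deff1}, and the identity $\operatorname{div}z=u_{t}-f(u)\in L^{2}(\Omega)$.

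The hard part is upgrading weak convergence to the pointwise pairing condition (2), $\int_{\Omega}(z(t),Du(t))=\int_{\Omega}|Du(t)|$, and the boundary condition (3), since weak limits alone only give $(z,Du)\le|Du|$ as measures through Anzellotti's theory. The plan here is to start from the exact relation $\int_{\Omega}|\nabla u_{p}|^{p}\,dx=\int_{\Omega}z_{p}\cdot\nabla u_{p}\,dx=-\int_{\Omega}u_{p}\,\operatorname{div}z_{p}\,dx$, integrate in time, substitute $\operatorname{div}z_{p}=\partial_{t}u_{p}-f(u_{p})$, and pass to the limit in the right-hand side using the strong $L^{2}$ convergence of $u_{p}$ and $f(u_{p})$; matching this against Anzellotti's generalized Green formula for the pairing $(z,Du)$ together with lower semicontinuity of the total variation recovers both $\int_{\Omega}(z,Du)=\int_{\Omega}|Du|$ and the sign relation $[z,\nu]\in\operatorname{sign}(-u)$ on $\partial\Omega$. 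The energy inequality \eqref{ENRR1} then follows by passing to the limit in the $p$-energy identity, using $\int_{\Omega}|Du(t)|\le\liminf_{p\to1^{+}}\frac1p\int_{\Omega}|\nabla u_{p}(t)|^{p}\,dx$, weak lower semicontinuity of the $L^{2}$ norm of the time derivative, convergence of $\int_{\Omega}F(u_{p})$, and $E_{p}(u_{0})\to E(u_{0})$; since all bounds are independent of $T$, the solution is global.
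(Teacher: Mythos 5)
Your overall strategy coincides with the paper's: $p$-Laplacian regularization, Galerkin plus potential-well invariance for each fixed $p$, uniform-in-$p$ bounds via the boundedness of $d_p$, and Anzellotti's pairing to identify the limit vector field $z$ and the boundary condition. Two points, however, are glossed over, and each requires a real argument. First, because your Galerkin basis lives in $W^{1,p}_{0,rad}(\Omega)$, the weak formulation you obtain in the limit of the finite-dimensional approximations holds a priori only against \emph{radial} test functions, whereas condition (1) of Definition \ref{Deff1} requires the equation in $\mathcal{D}'(\Omega)$, i.e. against all of $C^{\infty}_{0}(\Omega)$. The paper devotes Claim \ref{generalsolution} to exactly this: since $(f_1)$--$(f_2)$ impose no upper growth restriction on $f$, the energy functional is not even well defined (let alone $C^1$) on $W^{1,p}_{0}(\Omega)\cap L^{2}(\Omega)$, so one first truncates $f$ at the level $\|u(t)\|_{\infty}+1$ and then invokes Squassina's nonsmooth version of Palais' principle of symmetric criticality to extend the Euler--Lagrange identity to all test functions. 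Your proposal asserts the distributional equation without this step.

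Second, the absence of a growth condition on $f$ also affects the limit $f(u_{p})\to f(u)$: an Aubin--Lions argument based on the $BV$ bound and the $L^{2}$ bound on $\partial_{t}u_{p}$ only gives strong convergence in $C([0,T];L^{1}(\Omega))$, which is not enough to pass to the limit in $f(u_{p})$ for an arbitrary continuous $f$. What saves the day is the radial structure of the annulus: the embedding $BV_{rad}(\Omega)\hookrightarrow L^{\infty}(\Omega)$ of Lemma \ref{LeM} (and \eqref{EMBW1p} at the Galerkin stage) yields a bound for $u_{p}$ in $L^{\infty}(0,T;L^{\infty}(\Omega))$ uniform in $p$, hence $f(u_{p})$ is uniformly bounded and converges in $C([0,T];L^{\kappa}(\Omega))$ for every $\kappa$. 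You should make this explicit; it is precisely why Theorem \ref{TH1} can dispense with the subcritical growth hypothesis $(f_3)$ needed in Theorem \ref{TH2}. Apart from these two points (and the cosmetic difference of using H\"older rather than Young to pass from the $W^{1,p}$ bound to the $BV$ bound), your outline matches the paper's proof, including the identification of the pairing $(z,Du)=|Du|$ and of $[z,\nu]\in\mathrm{sign}(-u)$ via the test functions $u_{p}\varphi$ and $u_{p}-\varphi$ combined with lower semicontinuity of the total variation and Green's formula.
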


	\begin{remark} \label{R1}
		In whole this paper, $BV_{rad}(\Omega)$ denotes the subspaces of $BV(\Omega)$ that is formed by radial functions. It was shown in  Lemma \ref{LeM}, see  Section 3, that the following continuous embedding holds
		$$
		BV_{rad}(\Omega) \hookrightarrow L^{p}(\Omega) \quad \mbox{for} \quad p \in [1,\infty).
		$$
		Hence, by \cite[Corollary 4, p. 85]{JS}, $u\in C([0, T], L^{p}(\Omega) )$ for all $p \in [1,\infty)$. Then, the initial condition $u(0)=u_0$ in (\ref{P1}) exists and makes sense. 
	\end{remark}

	Concerning the non-radial case, that is, the case where $\Omega \subset \mathbb{R}^{N}(N \geq 1)$ is a bounded set with Lipschitz boundary, beside the conditions $(f_{1})-(f_2)$, we assume the following condition on the function $f$ :\\
	\noindent $(f_3)$ There exist $q\in (1, 1^{*}) $ and $C> 0$ 
	$$ 
	|f(s)|\leq C(1+|s|^{q-1}),\;\;\forall s\in \mathbb{R},
	$$
	where $1^*=\frac{N}{N-1}$ if $N \geq 2$ and $1^*=+\infty$ when $N=1$.

	Here, the Nehari set associated with $E$ is given by
	$$
	\mathcal{N}=\left\{u \in W^{1,1}_{0}(\Omega) \setminus \{0\}\,:\,  \int_{\Omega}|\nabla u|\,dx=\int_{\Omega}f(u)u\,dx\right\},
	$$
	and the potential well associated with problem (\ref{P1}) is the set
	\begin{equation}\label{pt77}
		W =\left\{u\in  W^{1,1}_{0}(\Omega)\,:\, E(u)< d \; \mbox{and} \;I(u)>0 \right\}\cup \{0\},
	\end{equation}
	where $I(u)= \displaystyle \int_{\Omega}|\nabla u|\,dx-\int_{\Omega}f(u)u\,dx$. 
	
	The second result reads as follows :
	
	\begin{theorem}\label{TH2}
		Let $\Omega \subset \mathbb{R}^{N}(N \geq 1)$ be a bounded set with Lipschitz boundary and assume that the assumptions $(f_{1})-(f_{3})$ hold and $u_0 \in W_0^{1,p_0}(\Omega) \cap W$, where $p_0$ was given in $(f_1)$.  Then, there exists a global weak  solution  to problem (\ref{P1}). Moreover, there holds 
		\begin{equation}\label{ENRR2}
			\int_{0}^{t}\|u_{s}(s)\|^{2}_{2}\,ds+E(u(t))\leq E(u_{0})\;\;\text{a.e.}\; t\in [0, +\infty).
		\end{equation}
	\end{theorem}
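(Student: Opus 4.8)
The argument runs parallel to the proof of Theorem \ref{TH1}, the only structural change being that the compactness coming from radial symmetry is now supplied by the subcritical growth hypothesis $(f_3)$. The plan is to attach to $(P)$, for each $p \in (1, p_0]$, the nondegenerate $p$-Laplacian problem
\begin{equation*}
	\left\{
	\begin{array}{llc}
		u_t - \Delta_p u = f(u) & \text{in} & \Omega \times (0,T), \\
		u = 0 & \text{on} & \partial\Omega \times (0,T), \\
		u(x,0) = u_0(x) & \text{in} & \Omega,
	\end{array}\right.
\end{equation*}
where $\Delta_p u = \text{div}(|\nabla u|^{p-2}\nabla u)$, to solve it by a Faedo--Galerkin scheme, and then to let $p \to 1^+$. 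Since $\Omega$ is bounded and $u_0 \in W_0^{1,p_0}(\Omega)$, one has $u_0 \in W_0^{1,p}(\Omega)$ for every $p \in (1,p_0]$, so each approximate problem carries the same datum. Its energy is $E_p(u) = \frac{1}{p}\int_\Omega |\nabla u|^p\,dx - \int_\Omega F(u)\,dx$, with Nehari functional $I_p(u) = \int_\Omega |\nabla u|^p\,dx - \int_\Omega f(u)u\,dx$, Nehari set $\mathcal{N}_p$, depth $d_p$ and potential well $W_p$ defined as for $E$; hypotheses $(f_1)$--$(f_2)$ give the mountain-pass structure of $E_p$ and, in particular, $d_p > 0$.

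First I would fix $p$ and build finite-dimensional approximate solutions $u_m$ on a Galerkin basis. Testing the Galerkin system with $\partial_t u_m$ produces the energy identity
\begin{equation*}
	\int_0^t \|\partial_s u_m(s)\|_2^2\,ds + E_p(u_m(t)) = E_p(u_{0m}),
\end{equation*}
which, together with $(f_2)$ and the assumption $u_0 \in W$, yields invariance of the potential well. Using $E_p(u_0)\to E(u_0)<d$, $I_p(u_0)\to I(u_0)>0$ and $d_p\to d$ as $p\to 1^+$, one has $u_0 \in W_p$ for $p$ close to $1$; since the flow cannot cross $\mathcal{N}_p$, the orbit stays in $W_p$ and, invoking $(f_2)$ on $W_p$, the quantity $\int_\Omega |\nabla u_m(t)|^p\,dx$ is controlled by $E_p(u_0)$, hence bounded uniformly in $t$ and $p$. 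This bound forbids blow up and forces $T_{\max} = +\infty$. Letting $m \to \infty$, using the monotonicity of $-\Delta_p$ to identify the diffusion term and the compact embedding $W_0^{1,p}(\Omega)\hookrightarrow\hookrightarrow L^q(\Omega)$ to pass to the limit in $f(u_m)$, I obtain a global solution $u_p$ of the $p$-problem satisfying the corresponding energy inequality.

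The heart of the matter is the limit $p \to 1^+$. From the uniform bound and Hölder's inequality $\int_\Omega |\nabla u_p| \leq |\Omega|^{1-1/p}\bigl(\int_\Omega |\nabla u_p|^p\bigr)^{1/p}$, the total variations $\int_\Omega |\nabla u_p(t)|\,dx$ are bounded uniformly in $(t,p)$, while the energy identity controls $\partial_t u_p$ in $L^2(0,T;L^2(\Omega))$. Setting $z_p = |\nabla u_p|^{p-2}\nabla u_p$, the family $z_p$ is bounded in $L^{p'}$ with $p' = p/(p-1)$ (since $\int|z_p|^{p'}=\int|\nabla u_p|^p$) and $\text{div}\,z_p = \partial_t u_p - f(u_p)$ is bounded in $L^2$. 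Passing to subsequences I would get $u_p \to u$ with $u \in L^\infty(0,T;BV(\Omega))$, $\partial_t u_p \rightharpoonup u_t$ in $L^2$, and, after a limiting argument over the $L^s$-estimates, $z_p \overset{*}{\rightharpoonup} z$ in $L^\infty(\Omega\times(0,T),\mathbb{R}^N)$ with $\|z\|_\infty \leq 1$ and $\text{div}\,z \in L^2$; this already recovers equation $(1)$ of the notion of weak solution in $\mathcal{D}'(\Omega)$. Here the role of $(f_3)$ is decisive: with $q \in (1,1^*)$, the uniform $BV$ bound and the compact embedding $BV(\Omega)\hookrightarrow\hookrightarrow L^q(\Omega)$ give $f(u_p)\to f(u)$ strongly, which replaces the radial compact embedding into $C(\overline{\Omega})$ used for Theorem \ref{TH1}.

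The main obstacle, treated last, is to recover the two structural conditions $\int_\Omega (z(t),Du(t)) = \int_\Omega |Du(t)|$ and $[z(t),\nu]\in \text{sign}(-u(t))$ on $\partial\Omega$, which cannot be obtained by passing directly to the limit in $\int_\Omega |\nabla u_p|^p$ because of the non-reflexivity of $BV$ and the non-smoothness of the functional. I would use the Anzellotti pairing of the bounded divergence-measure field $z$ with $Du$, the inequality $\int_\Omega (z,Du) \leq \int_\Omega |Du|$ coming from $\|z\|_\infty \leq 1$, and the identity $\int_\Omega z_p\cdot\nabla u_p\,dx = \int_\Omega |\nabla u_p|^p\,dx$ together with Young's inequality and lower semicontinuity of the total variation to produce the reverse inequality, forcing equality; the Gauss--Green formula for $BV$ functions and bounded divergence-measure fields then transfers the relaxed Dirichlet condition into $(3)$. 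Finally, the energy inequality $(ENRR2)$ follows by passing to the lower limit in the $p$-energy inequality, using weak lower semicontinuity of $\int_0^t\|\partial_s u_p\|_2^2\,ds$ and of $\int_\Omega|\nabla u_p(t)|\,dx$, together with $E_p(u_0)\to E(u_0)$ as $p\to 1^+$ by dominated convergence, since $u_0 \in W_0^{1,p_0}(\Omega)$.
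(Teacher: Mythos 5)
Your proposal follows essentially the same route as the paper: Galerkin approximation of the $p$-Laplacian problem with potential-well invariance giving uniform-in-$p$ gradient bounds, the limit $p\to 1^+$ in $BV(\Omega)$ with the vector field $z$ obtained as a weak limit of $|\nabla u_p|^{p-2}\nabla u_p$, the subcritical growth $(f_3)$ supplying the compactness that radial symmetry provided in Theorem \ref{TH1}, and the Anzellotti pairing together with the Green formula and lower semicontinuity of the total variation to recover conditions $(2)$--$(3)$ and the energy inequality. This matches the paper's argument in both structure and the key technical devices.
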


	Related to the Theorem \ref{TH2}, we are using the following definition of solution : 
	\begin{definition}\label{Def2}A function $u\in L^{\infty}(0, T; BV(\Omega)\cap L^{2}(\Omega))$ will be called a weak solution of (\ref{P1}) if $u_{t}\in L^{2}(0, T;L^{2}(\Omega))$, and there exist a vector field $z\in L^{\infty}( \Omega \times (0,T), \mathbb{R}^{N})$ with $\text{div}\, z(t) \in L^{2}(\Omega)$ and $\|z(t)\|_{\infty}\leq 1$  such that
		\begin{enumerate}
			\item $u_{t}(t)-\text{div}(z(t))=f(u(t)), \;\;\text{in}\;\mathcal{D}'(\Omega) \;\;\text{a.e.}\;t\in [0,T],$
			\item $ \int_{\Omega} (z(t), Du(t))	=\int_{\Omega}|Du(t)|,$
			\item $[z(t), \nu]\in \text{sign}(-u(t))\;\;\mathcal{H}^{N-1}-\text{a.e.}\;\;\text{on}\;\partial \Omega.$
		\end{enumerate}
	\end{definition}
	
	\begin{remark} \label{R2}
		In view of \cite[Lemma 1.2]{L}  and the regularity of the solution $u$ stated in Definition \ref{Def1}, we have  $u\in C([0, T], L^{q}(\Omega) )$ for each $q\in [1,2]$. Thus, the initial condition $u(0)=u_0$ exists and makes sense. 
	\end{remark}
	\subsection{Our approach} In the proof of Theorems \ref{TH1} and \ref{TH2}, we used an approximation technique that consists in working with a class of $p$-Laplacian problem associated with (\ref{P1}) and then taking the limit when $ p \to 1^+$ to get our results, more precisely, employing the potential well theory combined with Galerkin methods, we prove the existence of a global solution for the following class of quasilinear heat equation
	\begin{equation*}\label{Pp}\left\{
		\begin{array}{llc}
			u_{t}-\text{div}\left(|\nabla u|^{p-2}\nabla u\right)=f(u) & \text{in}\ & \Omega\times (0, +\infty) , \\
			u =0 & \text{in} & \partial\Omega\times (0,+\infty), \\
			u(x,0)=u_{0}(x)& \text{in} &\Omega , 
		\end{array}\right.
	\end{equation*}
	for all $p>1$, which is denoted by $u_p$. After that, we consider a sequence $p_m \to 1^+$ and show that the sequence $(u_m)$ converges for a solution of (\ref{P1}) in the sense of Definitions \ref{Def1} and \ref{Def2} respectively. In the Theorem \ref{TH1}, the reader is invited to see that we do not assume on function $f$ any growth condition from above at infinite, because in this case the domain $\Omega$ is radial and the properties of the spaces $W_{0,rad}^{1,p}(\Omega)$ and  $BV_{rad}(\Omega)$ apply an important role in our approach. However, in the proof of Theorem \ref{TH2}, we assumed that $f$ has a subcritical growth, because $\Omega$ is a general bounded. Finally, we would like to point out that we will work only with the case $N \geq 2$, because the case $N=1$ follows with few modifications. 
	
	The approximation technique by using $p$-Laplacian problems is well known to get a solution for problems involving the 1-Laplacian operator for stationary case, see for example Alves \cite{Alves0}, Demengen \cite{Demengel1,Demengel2}, Salas and Segura de Le\'on \cite{sergio}, Mercaldo, Rossi, Segura de Le\'on and Trombetti \cite{MRST},  Mercaldo, Segura de Le\'on and Trombetti \cite{MST},  Figueiredo and Pimenta  \cite{FigueiredoPimenta4}. Related to evolution case we only know the paper by Le\'on and Webler \cite{SW}. However, up to our knowledge, this is the first time that this approach is used to prove the existence of a global solution for a heat equation involving the 1-Laplacian operator when the nonlinearity $f$ is the form $f(u)$, that is, when $f$ can be a nonlinear function in the variable $u$. For example, the Theorem \ref{TH1} can be used for the nonlinearity $f(u)=|u|^{q-2}ue^{\alpha |u|^2}$ for $q>1$ and $\alpha>0$, while in Theorem \ref{TH2} we can work with $f(u)=|u|^{q-2}u+|u|^{s-2}u$ with $q,s \in (1,1^*)$.
	
	\subsection{Organization of the article}This article is organized as follows: In Section 2, we recall some notations and results involving the $BV(\Omega)$ space. In Sections 3 and 4 , we prove Theorems \ref{TH1} and \ref{TH2} respectively.
	
	\subsection{Notations} Throughout this paper, the letters $c$, $c_{i}$, $C$, $C_{i}$, $i=1, 2, \ldots, $ denote positive constants which vary from line to line, but are independent of terms that take part in any limit process. Furthermore, we denote the norm of $L^{p}(\Omega)$ for any $p\geq 1$ by $\|\,.\,\|_{p}$. In some places we will use $"\rightarrow"$,   $"\rightharpoonup"$ and $"\stackrel{*}{\rightharpoonup}"$ to denote the strong convergence, weak convergence and weak star convergence respectively.
	
	\section{Notation and preliminaries involving the space $BV(\Omega)$}
	In this section we will recall several facts on functions of bounded variation that will be used later on.
	
	Throughout the paper, without further mentioning, given an open bounded set $\Omega$ in $\mathbb{R}^{N}$ with Lipschtiz boundary, we denote by $\mathcal{H}^{N-1} $ the $(N-1)-$dimensional Hausdorff measure  and $|\Omega|$ stands for the $N$-dimensional Lebesgue measure. Moreover, we shall denote by $\mathcal{D}(\Omega)$ or $C^{\infty}_{0}(\Omega)$, the space of infinitely differentiable functions with compact support in $\Omega$ and $\nu (x) $ is the outer vector normal defined for $\mathcal{H}^{N-1}$- almost everywhere $x\in \partial \Omega$. 
	
	We will denote by $BV(\Omega)$ the space of functions of bounded variation, that is,  
	$$
	BV(\Omega)=\left\{u\in L^{1}(\Omega):\, Du \;\text{is a bounded Radon measure}\right\},
	$$
	where $Du :\Omega \rightarrow \mathbb{R}^{N}$ denotes the distributional gradient of $u$.  It can be proved that $ u\in {BV}(\Omega)$ is equivalent to $u\in L^{1}(\Omega)$ and 
	$$ \int_{\Omega}|Du|:=\text{sup}\left\{\int_{\Omega} u\,\text{div}\varphi\,dx :\, \varphi \in C^{\infty}_{0}(\Omega, \mathbb{R}^{N}),\, |\varphi (x)|\leq 1\; \forall x\in \Omega\right\}< +\infty,$$
	where $|Du|$ is the total variation of the vectorial Radon measure. It is well known that the space ${BV}(\Omega)$ endowed with the norm 
	$$
	\|u\|_{{BV}(\Omega)}:=\int_{\Omega} |Du|+\|u\|_{L^{1}(\Omega)},
	$$ 
	is a Banach space which is non reflexive and non separable. For more information on functions of bounded variation we refer the reader to \cite{Abm,Evn, Ziem}. 
	
	From \cite[Theorem 3.87]{Abm}, the notion of a trace on the boundary can be extended to functions $u\in {BV}(\Omega)$, through a bounded operator ${BV}(\Omega)\hookrightarrow L^{1}(\partial \Omega)$, which is also onto. As a consequence, an equivalent norm on ${BV}(\Omega) $ can be defined by
	\begin{equation} \label{secondnorm} 
		\|u\|:=\int_{\Omega} |Du|+\int_{\partial\Omega} |u|\, d\mathcal{H}^{N-1}.
	\end{equation} 
	In addition, by \cite[Corollary 3.49]{Abm} the following continuous embeddings hold 
	\begin{equation}
		{BV} (\Omega)\hookrightarrow L^{m}(\Omega)\;\;\text{for every}\;\;1\leq m\leq1^{*}=\frac{N}{N-1},
	\end{equation}  
	which are compact for $1\leq m<1^{*}$.

	In what follows, let us recall several important results from \cite{Anz}  which will be used throughout the paper. Following \cite{Anz}, let 
	\begin{equation}
		X(\Omega)=\left\{z\in L^{\infty}(\Omega, \mathbb{R}^{N}):\, \text{div}(z)\in L^{1}(\Omega) \right\}.
	\end{equation}
	
	If $z\in  X(\Omega)$ and $w\in {BV}(\Omega)$ we define the functional $(z, Dw): C^{\infty}_{0}(\Omega)\rightarrow \mathbb{R}$ by formula 
	\begin{equation}
		\langle (z, Dw),\varphi\rangle=-\int_{\Omega} w\varphi \text{div} (z)\,dx -\int_{\Omega}wz.\nabla \varphi\,dx, \;\;\forall \varphi \in C^{\infty}_{0}(\Omega).
	\end{equation}
	Then, by \cite[Theorem 1.5]{Anz} $(z, Dw)$ is a Radon measure in $\Omega$, 
	$$ \int_{ \Omega} (z, Dw)=\int_{ \Omega} z.\nabla w\,dx, $$
	for all $w\in W^{1,1}(\Omega)$  and 
	\begin{equation}
		\left|	\int_{\Omega} (z, Dw)\right|\leq\int_{B}|(z, Dw)|\leq \|z\|_{\infty}\int_{B} |Dw|,
	\end{equation}
	for every Borel $B$ set with $B\subseteq  \Omega $.  Moreover, besides the $BV-$norm, for any nonnegative smooth function $\varphi $  the functional given by 
	$$ w \mapsto \int_{\Omega}\varphi |Dw|, $$ 
	is lower semicontinuous with respect to the $L^{1}-$convergence, for details see \cite{Ambrosio}.

	In \cite{Anz}, a weak trace on $\partial \Omega$  of normal component of $z\in X(\Omega) $ is defined as the application $[z,\nu] : \partial \Omega \rightarrow \mathbb{R}$, such that $[z,\nu]\in L^{\infty}(\partial \Omega)$ and $\|[z,\nu]\|_{\infty}\leq \|z\|_{\infty}$. In addition, this definition coincides with the classical one, that is 
	\begin{equation}
		[z,\nu]=z.\nu, \;\text{for}\;z\in C^{1}(\overline{\Omega_{\delta}}, \mathbb{R}^{N}), 
	\end{equation} 
	where $\Omega_{\delta}=\{x\in \Omega :\, d(x, \Omega)< \delta\}$, for some $\delta > 0$ sufficiently small. We recall the Green formula involving the measure $(z,Dw)$ and the weak trace $[z,\nu]$ which was given in \cite{Anz}, namely :
	\begin{equation}\label{Gree}
		\int_{\Omega} (z, Dw)+\int_{\Omega} w\text{div} z=\int_{\partial\Omega} w[z,\nu]\, d\mathcal{H}^{N-1},
	\end{equation}
	for $z\in X(\Omega)$ and $w\in {BV} (\Omega).$

	Before concluding this section, we will prove the lemma below that is crucial in the proof of Theorem \ref{TH1}.

	\begin{lemma}\label{LeM}
		Assume (\ref{omega}) and let $BV_{rad}(\Omega)	=\{u\in BV(\Omega):\, u(x)=u(|x|)\}$. Then,  there exists $C> 0$ such that 
		\begin{equation} \label{BVinequaluty} 
			\sup_{x\in \overline{\Omega}}|u(x)|\leq Ca^{1-N}\|u\|. 
		\end{equation} 
		Hence, the embedding $BV_{rad}(\Omega) \hookrightarrow L^{\infty}(\Omega)$ is continuous and $BV_{rad}(\Omega) \hookrightarrow L^{p}(\Omega)$ is compact for all $p \in [1,\infty)$.
		
	\end{lemma}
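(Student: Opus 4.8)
The plan is to reduce \eqref{BVinequaluty} to the one-dimensional fundamental theorem of calculus, proving it first for smooth radial functions and then extending to general $u\in BV_{rad}(\Omega)$ by approximation. Write $\omega_{N-1}=\mathcal{H}^{N-1}(\partial B_1)$ for the surface measure of the unit sphere. For a smooth radial function $u(x)=v(|x|)$ with $v\in C^1([a,b])$, passing to polar coordinates gives
$$
\int_{\Omega}|\nabla u|\,dx=\omega_{N-1}\int_a^b |v'(s)|\,s^{N-1}\,ds,\qquad \int_{\partial\Omega}|u|\,d\mathcal{H}^{N-1}=\omega_{N-1}\bigl(a^{N-1}|v(a)|+b^{N-1}|v(b)|\bigr).
$$
Since $s^{N-1}\ge a^{N-1}$ on $[a,b]$, these identities yield
$$
\int_a^b|v'(s)|\,ds\le \frac{1}{\omega_{N-1}a^{N-1}}\int_{\Omega}|\nabla u|\,dx,\qquad |v(a)|\le\frac{1}{\omega_{N-1}a^{N-1}}\int_{\partial\Omega}|u|\,d\mathcal{H}^{N-1}.
$$

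For any $r\in[a,b]$ the fundamental theorem of calculus gives $v(r)=v(a)+\int_a^r v'(s)\,ds$, whence $|v(r)|\le |v(a)|+\int_a^b|v'(s)|\,ds$. Combining this with the two displayed bounds and recalling the norm \eqref{secondnorm}, I obtain $\sup_{[a,b]}|v|\le (\omega_{N-1}a^{N-1})^{-1}\|u\|$, which is exactly \eqref{BVinequaluty} with $C=1/\omega_{N-1}$ in the smooth radial case.

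To pass to a general $u\in BV_{rad}(\Omega)$, I would approximate $u$ by smooth radial functions $u_n$ converging to $u$ strictly in $BV$, i.e. $u_n\to u$ in $L^1(\Omega)$ together with $\int_\Omega|\nabla u_n|\,dx\to\int_\Omega|Du|$; writing $u=v(|x|)$ with $v\in BV(a,b)$ and mollifying $v$ in one variable keeps the approximants radial, while strict convergence forces the traces to converge in $L^1(\partial\Omega)$, so that $\|u_n\|\to\|u\|$. The smooth-case estimate shows $(u_n)$ is uniformly bounded in $L^\infty(\Omega)$ by $Ca^{1-N}\sup_n\|u_n\|$; passing to the a.e.\ limit along a subsequence of the $L^1$-convergent sequence then gives \eqref{BVinequaluty} for the (bounded, a.e.-defined) representative of $u$. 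This establishes the continuous embedding $BV_{rad}(\Omega)\hookrightarrow L^\infty(\Omega)$.

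Finally, the compactness of $BV_{rad}(\Omega)\hookrightarrow L^p(\Omega)$ for $p\in[1,\infty)$ follows by interpolation. A bounded sequence in $BV_{rad}(\Omega)$ is bounded in $BV(\Omega)$, hence by the compact embedding $BV(\Omega)\hookrightarrow L^1(\Omega)$ (the case $m=1$ of \cite[Corollary 3.49]{Abm}) admits a subsequence converging in $L^1(\Omega)$ to some $u$, which is again radial; by \eqref{BVinequaluty} the sequence is uniformly bounded in $L^\infty(\Omega)$, and the elementary estimate $\|u_n-u\|_p^p\le (2M)^{p-1}\|u_n-u\|_1$, with $M$ the uniform $L^\infty$ bound, upgrades the $L^1$-convergence to $L^p$-convergence. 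The main point to handle carefully is the justification of the strict radial approximation together with the $L^1(\partial\Omega)$-continuity of the trace under strict convergence; the remaining ingredients are the one-dimensional fundamental theorem of calculus and interpolation.
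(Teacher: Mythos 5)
Your argument is correct, but it takes a genuinely different route from the paper's. The paper does not re-derive the radial estimate from scratch: it invokes the Strauss--Lions type radial lemma for $BV(\mathbb{R}^N)$ from \cite[Lemma 4.1]{GM1}, which gives $|u(x)|\le |x|^{1-N}\|u\|$ a.e.\ for radial $BV$ functions on $\mathbb{R}^N$, and combines it with the extension-by-zero theorem \cite[Theorem 5.8]{Evn}, under which the total variation of the extension $\widetilde u$ of $u$ by zero equals $\int_\Omega|Du|+\int_{\partial\Omega}|u|\,d\mathcal{H}^{N-1}=\|u\|$; the inequality \eqref{BVinequaluty} then drops out immediately because $|x|\ge a$ on $\Omega$. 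Your proof instead establishes the same bound by hand: polar coordinates and the one-dimensional fundamental theorem of calculus for smooth radial profiles, followed by a strict radial approximation and the continuity of the trace operator under strict convergence. What you gain is a self-contained, elementary argument with an explicit constant $C=1/\omega_{N-1}$; what you pay is exactly the step you flag at the end, namely justifying that a radial $BV$ function admits \emph{radial} smooth strict approximants with converging traces (this can be done either by mollifying the one-dimensional profile $v\in BV(a,b)$ --- legitimate since $a>0$ makes the weight $s^{N-1}$ comparable to a constant --- or by averaging a standard strict approximation over rotations and using Jensen's inequality plus lower semicontinuity), a technicality the paper sidesteps entirely by outsourcing it to \cite{GM1} and \cite{Evn}. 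The compactness argument --- compact embedding of $BV(\Omega)$ into $L^{1}(\Omega)$ combined with the uniform $L^{\infty}$ bound and interpolation --- is identical to the paper's.
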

	\begin{proof}
		From \cite[ Lemma 4.1 ]{GM1}, if $u\in  BV_{rad}(\mathbb{R}^{N})$ we have that  
		\begin{equation}\label{IN}
			|u(x)|\leq \frac{1}{|x|^{N-1}}\|u\|, \quad \mbox{a.e. in} \quad \mathbb{R}^N.
		\end{equation}
		Setting
		\begin{equation}
			\widetilde{u}=\left\{
			\begin{array}{l}
				u \;\;\text{if}\;\;x\in \Omega,\\
				0 \;\;\text{if}\;\;x\in \mathbb{R}^{N}\backslash \Omega,\\
			\end{array}\right.
		\end{equation} 
		in view of  \cite[Theorem 5.8 ]{Evn}, $\widetilde{u} \in BV(\mathbb{R}^{N})$ and 
		$$ 
		\int_{\mathbb{R}^{N}} |D\widetilde{u}|=\int_{\Omega} |Du|+\int_{\partial\Omega} |u|\,d\mathcal{H}^{N-1}.
		$$
		This combined with (\ref{IN}) shows (\ref{BVinequaluty}) and the continuous embedding $BV_{rad}(\Omega) \hookrightarrow L^{\infty}(\Omega)$. The compact embedding $BV_{rad}(\Omega) \hookrightarrow L^{p}(\Omega)$ for $p \in [1,\infty)$ follows combining the interpolation in the Lebesgue's space together with the compact embedding $BV_{rad}(\Omega) \hookrightarrow L^{1}(\Omega)$ and the continuous embedding $BV_{rad}(\Omega) \hookrightarrow L^{\infty}(\Omega)$.

	\end{proof}

	\section{Proof of Theorem \ref{TH1}}
	This section is devoted to prove Theorem \ref{TH1}. From now on, $p_{0}$ is the constant fixed in $(f_1)$. For each $p\in (1, p_{0})$, let us consider the following problem 
	\begin{equation}\label{Pp}\left\{
		\begin{array}{llc}
			u_{t}-\text{div}\left(|\nabla u|^{p-2}\nabla u\right)=f(u) & \text{in}\ & \Omega\times (0, +\infty) , \\
			u =0 & \text{in} & \partial\Omega\times (0,+\infty), \\
			u(x,0)=u_{0}(x)& \text{in} &\Omega , 
		\end{array}\right.
	\end{equation}
	where 
	\begin{equation*} \label{omegaab}
		\Omega=\{x\in \mathbb{R}^N\;:\;0<a<|x|<b\}.
	\end{equation*}
	In the sequel,  we denote by $E_p:W_{0,rad}^{1,p}(\Omega) \to \mathbb{R}$ the energy functional associated with problem (\ref{Pp}) given by 
	\begin{equation} \label{Ep}
		E_p(u)=\frac{1}{p}\int_{\Omega}|\nabla u|^{p}\,dx-\int_{\Omega}F(u)\,dx,
	\end{equation}
	and the Nehari set associated with $E_p$  given by
	$$
	\mathcal{N}_p=\left\{u \in W_{0,rad}^{1,p}(\Omega) \setminus \{0\}\,:\, E_p'(u)u=0\right\}.
	$$
	Hereafter, let us also denote by $d_p$ the following real number
	$$
	d_p=\inf_{u \in \mathcal{N}_p}E_p(u).
	$$
	The potential well associated with problem $\eqref{Pp}$ is the set
	\begin{equation}\label{pt7}
		W_p =\left\{u\in W_{0,rad}^{1,p}(\Omega)\,:\, E_p(u)< d_p \; \mbox{and} \; I_p(u)>0 \right\}\cup \{0\},
	\end{equation}
	where $I_p(u)=E_p'(u)u$ for all $u \in W_{0,rad}^{1,p}(\Omega)$.
	
	\vspace{0.5 cm}
	
	Our first lemma establishes an estimate from above for  $d_p$ that will be used later on. 
	
	\begin{lemma} \label{boundednessdp} There are $p_1 \in (1,p_0)$ and $M>0$ such that $d_p \leq M$ for all $p \in (1,p_1]$. 
		
	\end{lemma}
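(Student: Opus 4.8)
The plan is to bound $d_p$ from above by evaluating $E_p$ along a single, $p$-independent ray. Fix once and for all a nonzero radial function $w \in W^{1,p_0}_{0,rad}(\Omega)$; since $\Omega$ is bounded we have $w \in W^{1,p}_{0,rad}(\Omega)$ for every $p \in (1,p_0]$, and by the compact embedding $W^{1,p_0}_{0,rad}(\Omega) \hookrightarrow C(\overline\Omega)$ recalled in \eqref{EMBW1p} we also have $w \in L^\infty(\Omega)$. Writing $A_p = \int_\Omega |\nabla w|^p\,dx$ and using the elementary inequality $x^p \le 1 + x^{p_0}$ for $x \ge 0$ and $p \le p_0$, one gets the uniform bound $A_p \le |\Omega| + \int_\Omega |\nabla w|^{p_0}\,dx =: C_0$ for all $p \in (1,p_0]$. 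The idea is then to study the fibering map $g_p(t) = E_p(tw)$ and to show that its supremum over $t \ge 0$ is bounded by a constant independent of $p$, provided $p$ stays below a suitable threshold.

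First I would record two consequences of the hypotheses on $f$. From $(f_2)$, a standard integration of $\theta F(s) \le f(s)s$ along rays yields the superhomogeneity $F(ts) \ge t^\theta F(s)$ for all $t \ge 1$ and $s \in \mathbb{R}$, together with $F > 0$ on $\mathbb{R} \setminus \{0\}$; integrating over $\Omega$ gives $\int_\Omega F(tw)\,dx \ge B\,t^\theta$ for $t \ge 1$, where $B = \int_\Omega F(w)\,dx > 0$ since $w \neq 0$. From $(f_1)$, for each $\varepsilon > 0$ there is $\delta > 0$ with $|F(s)| \le \tfrac{\varepsilon}{p_0}|s|^{p_0}$ whenever $|s| \le \delta$, so that $\int_\Omega F(tw)\,dx \le \tfrac{\varepsilon}{p_0} t^{p_0}\|w\|_{p_0}^{p_0}$ for $t$ small; since $p < p_0$, this makes $g_p(t) > 0$ for small $t > 0$.

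The central estimate splits according to the size of $t$ and is where the choice of $p_1$ enters. I would fix $p_1 \in (1,\min\{p_0,\theta\})$, which is a nonempty interval because $\theta > 1$ and $p_0 > 1$. For $t \in [0,1]$ and $p \in (1,p_1]$, since $F \ge 0$, $t^p \le 1$ and $\tfrac1p \le 1$, we have $g_p(t) \le \tfrac{t^p}{p}A_p \le C_0$. For $t \ge 1$ and $p \le p_1$ we have $t^p \le t^{p_1}$, hence $g_p(t) \le C_0\, t^{p_1} - B\, t^\theta$; because $\theta > p_1$, the right-hand side is a fixed function of $t$ that tends to $-\infty$, so its maximum over $t \ge 1$ is a finite constant $M_1 = M_1(C_0,B,p_1,\theta)$ not depending on $p$. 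Therefore $\sup_{t \ge 0} g_p(t) \le M := \max\{C_0,M_1\}$ for every $p \in (1,p_1]$.

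Finally I would convert this into a bound on $d_p$. Because $g_p(0)=0$, $g_p(t) > 0$ for small $t > 0$, and $g_p(t) \to -\infty$ as $t \to +\infty$, the map $g_p$ attains a positive maximum at some interior point $t_p > 0$; there $g_p'(t_p)=0$, i.e. $I_p(t_p w)=0$, so $t_p w \in \mathcal{N}_p$. In particular $\mathcal{N}_p \neq \emptyset$ and $d_p \le E_p(t_p w) = \sup_{t\ge0} g_p(t) \le M$, which is the claim. The main obstacle is precisely the uniformity in $p$: the naive estimate $t^p \le t^{p_0}$ would force the comparison exponent $p_0$ against $t^\theta$ and could fail to produce a bounded quantity when $\theta \le p_0$; restricting to $p \le p_1 < \min\{p_0,\theta\}$ is exactly the device that guarantees the superhomogeneous term $t^\theta$ dominates and yields a $p$-independent ceiling $M$.
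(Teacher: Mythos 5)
Your proof is correct and follows essentially the same strategy as the paper: fix a single radial test function, project it onto the Nehari set $\mathcal{N}_p$ along the ray $t \mapsto tw$, and bound the resulting energy uniformly for $p$ near $1$. The only difference is one of bookkeeping --- the paper bounds the Nehari parameters $t_p$ using $(f_2)$ and then estimates $E_p(t_p\varphi)\leq \frac{t_p^p}{p}\int_\Omega|\nabla\varphi|^p\,dx$, whereas you bound $\sup_{t\geq 0}E_p(tw)$ directly via the splitting $t\leq 1$ versus $t\geq 1$ and the choice $p_1<\min\{p_0,\theta\}$, which makes the uniformity in $p$ (left rather terse in the paper) fully explicit.
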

	\begin{proof} Let $\varphi \in C_{0,rad}^{\infty}(\Omega) \setminus \{0\}$. By $(f_1)$ and $(f_2)$, for each $p \in (1,p_0)$ there is $t_p >0$ such that $t_p \varphi \in \mathcal{N}_p$, that is, 
		$$
		t_p^{p-1}\int_{\Omega}|\nabla \varphi|^p\,dx=\int_{\Omega}f(t_p\varphi)\varphi\,dx.
		$$	 
		Since $\displaystyle \lim_{p \to 1^+}\int_{\Omega}|\nabla \varphi|^p\,dx=\int_{\Omega}|\nabla \varphi|\,dx$, the condition $(f_2)$ ensures that $(t_p)$ is bounded for $p$ close to 1. Now, using the inequality below
		$$
		d_p \leq E_p(t_p\varphi)\leq \frac{t_p^{p}}{p}\int_{\Omega}|\nabla \varphi|^p\,dx,
		$$ 
		we deduce that there are $p_1 \in (1,p_0)$ and $M>0$ such that $d_p \leq M$ for all $p \in (1,p_1)$. This proves the desired result.	
	\end{proof}

	From where it follows, decreasing $p_1 \in (1,p_0)$ if necessary, we can assume that  
	$$
	u_0 \in 	W_p =\left\{u\in W_{0,rad}^{1,p}(\Omega)\,:\, E_p(u)< d_p,\; I_p(u)>0 \right\}\cup \{0\} , \quad \forall p \in (1,p_1).
	$$
	In the sequel, we are supposing that $p \in (1,p_1)$ and $p_1 < \theta$.  
	
	For the reader's convenience we state the definition of weak solutions to (\ref{Pp}). 
	
	\begin{definition}(global weak solution)\label{DE}
		We say that $u\in L^{\infty}(0,+\infty; W_{0}^{1, p}(\Omega)) $ is a global weak solution of problem (\ref{Pp}) if, $u_{t} \in L^{2}(0, +\infty; L^{2}(\Omega))$ and the following equalities hold
		\begin{enumerate}
			\item $\int_{\Omega}u_{t}(t)v\,dx+\int_{\Omega} |\nabla u|^{p-2}\nabla u.\nabla v\,dx=\int_{\Omega} f(u(t))v\,dx$\\
			for each $v\in W_{0}^{1, p}(\Omega)\cap L^{2}(\Omega)$ and a.e time $0\leq t< +\infty$, and
			\item $u(0)=u_{0}$.
		\end{enumerate}
	\end{definition}
	
	In order to show the global existence of solutions to (\ref{Pp}) we will apply the Galerkin method to solve problem (\ref{Pp}). The proof will be divided into three steps : 
	\begin{description}
		\item[Step 1]  For $N\geq 1$,  we have the Gelfand triple
		$$ W^{1,p}_{0,rad}(\Omega) \hookrightarrow^{c,d} L_{rad}^{2}(\Omega)\hookrightarrow^{c,d} \left(W^{1,p}_{0,rad}(\Omega)\right)^{'},$$
		which is well defined because of (\ref{EMBW1p}). Here, $\hookrightarrow^{c,d} $ denotes a dense and continuous embedding. 
		
		Let $\{V_{m}\}_{m\in \mathbb{N}}$  be a Galerkin scheme of the separable  Banach space  $W_{0,rad}^{1,p}(\Omega)$, i.e,
		\begin{equation}
			V_{m}=span\{w_{1}, w_{2}, \ldots, w_{m}\},\;\; \overline{\bigcup_{m\in \mathbb{N}}V_{m}}^{\|\,.\|_{W_{0,rad}^{1, p}(\Omega)}}=W_{0,rad}^{1, p}(\Omega),
		\end{equation}
		where $\{w_{j}\}_{j=1}^{\infty}$ is an orthonormal basis in $L_{rad}^{2}(\Omega)$. As  $u_{0}\in W^{1,p}_{0,rad}(\Omega)$, there exists $u_{0m}\in V_{m}$ such that
		\begin{equation}\label{CVV}
			u_{m}(0)=u_{0m}\rightarrow u_{0}\;\;\text{strongly in }\; W^{1,p}_{0,rad}(\Omega) \;\text{as}\; m\rightarrow \infty.
		\end{equation}

		From now on, let us denote by $\|\cdot\|_{1,p}$ the usual norm in $W^{1,p}_{0,rad}(\Omega)$ given by 
		$$
		\|u\|_{1,p}=\|\nabla u\|_p, \quad \forall u \in W^{1,p}_{0,rad}(\Omega).
		$$

		For each $m$, we look for the approximate solutions $ u_{m}(x,t)=\sum\limits_{j=1}^{m}g_{jm}(t)w_{j}(x)$ satisfying the following identities :
		\begin{equation}\label{g1}
			\int_{\Omega} u_{mt}(t)w_{j}\,dx+\int_{ \Omega} |\nabla u_{m}(t)|^{p-2}\nabla u_{m}(t).\nabla w_{j}\,dx=\int_{\Omega} f(u_{m}(t))w_{j}\,dx,\;\;j\in \{1, \ldots, m\},
		\end{equation}
		with the initial conditions
		\begin{equation}\label{g2}
			u_{m}(0)=u_{0m}.
		\end{equation}
		Then $\eqref{g1}-\eqref{g2}$ is equivalent to the following initial value problem for a system of nonlinear ordinary differential equations on $g_{jm}$ :
		\begin{equation}\label{sys}
			\left\{
			\begin{array}{l}
				g'_{jm}(t)=H_{j}(g(t)),\;j=1,2, \ldots, m,\;t\in[0,t_{0}],\\
				g_{jm}(0)=a_{jm},\;j=1,2,\ldots, m.
			\end{array}\right.
		\end{equation}
		where $H_{j}(g(t))=-\int_{ \Omega} |\nabla u_{m}(t)|^{p-2}\nabla u_{m}(t).\nabla w_{j}\,dx+\int_{\Omega}f( u_{m})w_{j}\,dx$.
		By the Picard iteration method, there is $t_{0,m}> 0$ depending on $|a_{jm}|$ such that problem $\eqref{sys}$ admits a unique local solution $g_{jm}\in C^{1}([0,t_{0,m}])$. Hereafter, we will assume that $[0,T_{0,m})$ is the maximal interval of existence of the solution $u_m(t)$. 
		\item[Step 2] Multiplying the $j^{th}$ equation in $\eqref{g1}$ by $g'_{jm}(t)$ and summing over $j$ from $1$ to $m$, we obtain  
		\begin{equation}\label{g3}
			\|u_{mt}(t)\|^{2}_{2}+\frac{d}{dt}E_p(u_{m}(t))=0, \;\;t\in [0,T_{0,m}).
		\end{equation}
		Integrating  $\eqref{g3}$ over $(0, t)$ yields 
		\begin{equation}\label{EA}
			\int_{0}^{t}\|u_{ms}(s)\|^{2}_{2}\,ds+E_p(u_{m}(t))=E_{p}(u_{0m}), \; t\in [0, T_{0,m}).
		\end{equation}
		Since  $(u_{0m})$ converges to $u_{0}$ strongly in $W_{0,rad}^{1, p}(\Omega)$, by the continuity of $E$ it follows that 
		$$
		E_p(u_{0m})\rightarrow E_p(u_{0}),\;\text{as}\quad  m\rightarrow +\infty. 
		$$
		From the assumption that $E_p(u_{0})< d_p$,  we have $E_p(u_{0m})< d_p$ for sufficiently large $m$. This combined with $\eqref{EA}$ leads to
		\begin{equation}\label{vd}
			E_p(u_{m}(t))< d_p, \quad t\in [0,T_{0,m}), 
		\end{equation}
		for sufficiently large $m$. Now, we are going to show that $T_{0,m}=+\infty$ and 
		\begin{equation}\label{v1}
			u_{m}(t)\in W_p,\;\;\forall t\geq 0, 
		\end{equation}
		for sufficiently large $m.$ Suppose by contradiction that $u_{m}(t_1) \notin W_p$ for some $t_1 \in[0,t_{0,m})$. Let $t_{*} \in [0,T_{0,m})$ be the smallest time for which $u_{m}(t_{*})\notin W_p$. Then, by continuity of $u_{m}(t)$, we get $u_{m}(t_{*})\in \partial W_p$. Hence, it turns out that
		\begin{equation}\label{v4}
			E_p(u_{m}(t_{*}))=d_p,
		\end{equation} 
		or 
		\begin{equation}\label{v2}
			I_p(u_{m}(t_{*}))=0.
		\end{equation} 
		It is clear that $\eqref{v4}$ could not occur by $\eqref{vd}$ while if $\eqref{v2}$ holds then, by the definition of $d_p$,  
		$$ E_p(u_{m}(t_{*}))\geq \inf_{u\in \mathcal{N}}E_p(u)=d_{p}, $$
		which is also a contradiction with $\eqref{vd}$. Consequently, $\eqref{v1}$ holds.\\
		Combining $\eqref{vd}$ and $\eqref{v1}$, we find
		\begin{equation}
			\int_{0}^{t}\|u_{ms}(s)\|_{2}^{2}\,ds+\left(\frac{1}{p}-\frac{1}{\theta}\right) \int_{ \Omega} |\nabla u_{m}(t)|^p\,dx< d_p, \;t\in [0, T_{0,m}).
		\end{equation}
		Thus, it turns out that
		\begin{equation}\label{EST}
			\int_{ \Omega} |\nabla u_{m}(t)|^p\,dx< \frac{\theta pd_p}{\theta -p} \;\; \mbox{and} \;\; \int_{0}^{t}\|u_{ms}(s)\|^2_{2}\,ds< d_p,\;\;t\in [0, T_{0,m}),
		\end{equation}
		for $m$ large enough. Hence, the above estimates give $T_{0,m}=+\infty$. Here we are using the fact that if $T_{0,m}<+\infty$, then we must have $\displaystyle \lim_{t \to T_{0,m}^{-}}\|u_m(t)\|_{1,p}=+\infty$, see \cite[Lemma 2.4, p. 48]{JAM}. 
		
		An important inequality that we will be used later on is the following:
		\begin{claim}  \label{limitaumL1} $\|u_m(t)\|_2\leq \|u_0\|_{2}$ for all $t \geq 0$ and $m$ large enough.
		\end{claim}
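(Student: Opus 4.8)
The plan is to derive a differential inequality for the map $t\mapsto \|u_m(t)\|_2^2$ by testing the Galerkin system against the approximate solution itself. Concretely, I would multiply the $j$-th equation in \eqref{g1} by $g_{jm}(t)$ and sum over $j$ from $1$ to $m$. Since $u_m(t)=\sum_{j=1}^m g_{jm}(t)w_j$, this yields
\begin{equation*}
\int_{\Omega} u_{mt}(t)u_m(t)\,dx+\int_{\Omega}|\nabla u_m(t)|^{p}\,dx=\int_{\Omega}f(u_m(t))u_m(t)\,dx,\qquad t\in[0,T_{0,m}).
\end{equation*}
The first integral equals $\tfrac12\frac{d}{dt}\|u_m(t)\|_2^2$, while the remaining two terms are exactly $I_p(u_m(t))=E_p'(u_m(t))u_m(t)=\int_{\Omega}|\nabla u_m(t)|^p\,dx-\int_{\Omega}f(u_m(t))u_m(t)\,dx$. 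Consequently
\begin{equation*}
\tfrac12\frac{d}{dt}\|u_m(t)\|_2^2=-I_p(u_m(t)),\qquad t\in[0,T_{0,m}).
\end{equation*}

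The crucial sign information comes for free from what has already been established: by \eqref{v1} we know that $u_m(t)\in W_p$ for all $t\geq 0$ and $m$ large. By the definition of the potential well $W_p$ this means either $u_m(t)=0$, in which case $I_p(u_m(t))=0$, or $u_m(t)\neq 0$ with $I_p(u_m(t))>0$. In either case $I_p(u_m(t))\geq 0$, so the displayed identity gives $\frac{d}{dt}\|u_m(t)\|_2^2\leq 0$. Hence $t\mapsto\|u_m(t)\|_2^2$ is non-increasing on $[0,\infty)$ (recall $T_{0,m}=+\infty$ from \eqref{EST}), and therefore $\|u_m(t)\|_2\leq \|u_m(0)\|_2=\|u_{0m}\|_2$ for every $t\geq 0$.

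It then remains to control the initial datum. Here I would exploit that $\{w_j\}_{j=1}^\infty$ is an \emph{orthonormal} basis of $L^2_{rad}(\Omega)$: taking $u_{0m}$ to be the $L^2$-orthogonal projection of $u_0$ onto $V_m$, Bessel's inequality gives $\|u_{0m}\|_2\leq\|u_0\|_2$ directly, while the standard density of $\bigcup_m V_m$ in $W^{1,p}_{0,rad}(\Omega)$ secures the strong convergence \eqref{CVV}. Combining this with the monotonicity of the previous step yields $\|u_m(t)\|_2\leq\|u_{0m}\|_2\leq\|u_0\|_2$ for all $t\geq 0$ and $m$ large, which is the claim.

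The main obstacle is the last paragraph rather than the energy computation: one must ensure that the \emph{same} sequence $u_{0m}$ simultaneously satisfies the sharp bound $\|u_{0m}\|_2\leq\|u_0\|_2$ (for which the $L^2$-projection and Bessel's inequality are natural) \emph{and} the $W^{1,p}_{0,rad}$-convergence \eqref{CVV} required to run the Galerkin scheme. If one is content with a uniform-in-$m$ bound instead of the sharp constant $\|u_0\|_2$, the compact embedding $W^{1,p}_{0,rad}(\Omega)\hookrightarrow L^2(\Omega)$ already forces $u_{0m}\to u_0$ in $L^2$, so $\|u_{0m}\|_2$ is bounded and the monotonicity alone suffices; the refinement to exactly $\|u_0\|_2$ is precisely what the orthonormality of the basis delivers.
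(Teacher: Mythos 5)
Your proof is correct and follows essentially the same route as the paper: multiply the $j$-th Galerkin equation by $g_{jm}(t)$, sum to get $\tfrac12\frac{d}{dt}\|u_m(t)\|_2^2=-I_p(u_m(t))\leq 0$ via the invariance $u_m(t)\in W_p$, and conclude by monotonicity. Your extra care in the last paragraph about passing from $\|u_{0m}\|_2$ to $\|u_0\|_2$ (via the $L^2$-projection and Bessel's inequality) addresses a step the paper silently elides, and is a worthwhile refinement rather than a different approach.
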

		Indeed, multiplying the $j^{th}$ equation in $\eqref{g1}$ by $g_{jm}(t)$ and summing  up over $j=1, \ldots m$, we obtain
		\begin{equation*}
			\frac{1}{2}\frac{d}{dt}\|u_{m}(t)\|^{2}_{2}=-I_{p}(u_{m}(t))<0, \quad \forall t \in [0,+\infty),
		\end{equation*}
		for $m$ large. This implies that $t \mapsto \|u_m(t)\|_{2}^{2}$ is a decreasing function in $[0,+\infty)$. Thereby,  
		\begin{equation}\label{ET}
			\|u_{m}(t)\|^{2}_{2}\leq \|u_{0}\|^{2}_{2},\;\; \forall t\in [0, +\infty),
		\end{equation}
		for $m$ large enough, showing the claim.

		\item[Step 3]
		From $\eqref{EST}-\eqref{ET}$, there is a function $u$ and a subsequence of $(u_{m})$, still denoted by $(u_{m})$, such that
		\begin{equation}\label{LIm}
			\left\{\begin{array}{lcl}
				u_{m} \stackrel{*}{\rightharpoonup} u & \text{in}& L^{\infty}(0,T;W_{0,rad}^{1,p}(\Omega)),\\
				u_{mt}\rightharpoonup u_{t} &\text{in} & L^{2}(0,T;L_{rad}^{2}(\Omega)),\\
				u_{m}\rightharpoonup^{*} u &\text{in} & L^{\infty}(0,T;L_{rad}^{2}(\Omega)),\\
				-\text{div} \left(|\nabla u_{m}|^{p-2}\nabla u_{m}\right) \stackrel{*}{\rightharpoonup} \chi &\text{in} & L^{\infty}\left(0,T;\left(W_{0,rad}^{1, p}(\Omega)\right)'\right).\\
			\end{array}\right.
		\end{equation}
		Moreover, from  (\ref{EMBW1p}), (\ref{EST}) and \cite[Corollary 4, p. 85]{JS} for all $T> 0$ we have
		\begin{equation}\label{T1*}
			u_{m}\rightarrow u\;\;\text{in}\; C([0,T], C(\overline{\Omega})).
		\end{equation}
		In particular, 
		\begin{equation}\label{T1}
			u_{m}\rightarrow u\;\;\text{in}\; C([0,T], L^{\kappa}(\Omega)),  \;\forall \kappa\in [1, \infty],
		\end{equation}
		\begin{equation} \label{limitaul2}
			\|u(t)\|_2\leq \|u_0\|_{2}, \quad \forall t \geq 0, 
		\end{equation}
		
		and
		\begin{equation}\label{AL}
			u_{m}(x,t)\rightarrow u(x,t) \;\;\text{a.e.}\; x\in \Omega, \;\forall t\geq0.
		\end{equation}
		As $f$ is a continuous function, the limit (\ref{T1}) ensures that 
		\begin{equation}\label{T1**}
			f(u_{m}) \rightarrow f(u)\;\;\text{in}\; C([0,T], C(\overline{\Omega})).
		\end{equation}
		Thereby, for any fixed $j$, letting $m\rightarrow \infty$ in $\eqref{g1}$, we obtain
		\begin{eqnarray}\label{D1}
			\int_{0}^{t}\int_{\Omega} u_{\tau}(\tau)w_{j}\,dxd\tau+\int_{0}^{t}\langle \chi(\tau)), w_{j} \rangle\,d\tau=\int_{0}^{t}\int_{\Omega} f( u(\tau))w_{j}\,dxd\tau,\;\;\forall t\in [0, T].
		\end{eqnarray}
		From the density of $V_{m}$ in $W_{0,rad}^{1,p}(\Omega)$, it follows that for all $v\in W_{0,rad}^{1, p}(\Omega)$
		\begin{equation}\label{Sl}
			\int_{\Omega} u_{t}(t)v\,dx+\langle \chi (t),v\rangle =\int_{\Omega} f(u(t))v\,dx,\; \text{a.e.}\;t\in [0, T].
		\end{equation}
		By $\eqref{LIm}$ and \cite[see, Lemma 3.1.7]{Zh},
		$$ u_{m}(0)\rightarrow u(0)\;\text{weakly}\;\text{in}\; L^{2}(\Omega). $$
		However, by (\ref{CVV}) we know that $u_m(0) \to u_0$ in $W_{0,rad}^{1, p}(\Omega)$, in particular $u_m(0) \to u_0$ in $L^{2}(\Omega)$, and so, $u(0)=u_{0}$. This shows that $u$ satisfies the first initial condition. Next step is to prove that
		\begin{equation}\label{CL1}
			-\text{div} \left(|\nabla u|^{p-2}\nabla u\right)=\chi.
		\end{equation}
		In doing so, multiplying $\eqref{g1}$ by $g_{jm}(t)$ and summing up from $1$ to $m$, afterward integrating over $(0,T)$ yields
		\begin{equation}\label{ENR1}
			\int_{0}^{T}\langle -\text{div}\left(|\nabla u_{m}(t)|^{p-2} \nabla u_{m}(t)\right), u_{m}(t)\rangle\,dt=-\frac{1}{2}\|u_{m}(T)\|_{2}^{2}+\frac{1}{2}\|u_{m}(0)\|_{2}^{2}+\int_{0}^{T}\int_{\Omega} f( u_{m}(t))u_{m}(t)\,dxdt.
		\end{equation}
		From (\ref{T1*}) and (\ref{T1**}),  
		\begin{equation}\label{ERD}
			\int_{0}^{T}\int_{ \Omega} f(u_{m}(t))u_{m}(t)\,dxdt \rightarrow \int_{0}^{T}\int_{ \Omega} f(u(t))u(t)\,dxdt.	
		\end{equation}
		Letting $m\rightarrow \infty$ in $\eqref{ENR1}$, we obtain
		\begin{eqnarray*}\label{ENR2}
			\limsup_{m\rightarrow \infty}\int_{0}^{T}\langle -\text{div} \left(|\nabla u_{m}|^{p-2}\nabla u_{m}\right), u_{m}(t)\rangle\,dt&=& -\frac{1}{2}\liminf_{m\rightarrow \infty}\|u_{m}(T)\|^{2}_{2} +\frac{1}{2}\lim_{m\rightarrow \infty}\|u_{0m}\|_{2}^{2}\\
			&&+\lim_{m\rightarrow \infty}\int_{0}^{T}\int_{ \Omega}f(u_{m})u_{m}\,dxdt\\
			&\leq &-\frac{1}{2}\|u(T)\|^{2}_{2} +\frac{1}{2}\|u_{0}\|_{2}^{2}+\int_{0}^{T}\int_{ \Omega}f(u)u\,dxdt\\
			&&=\int_{0}^{T}\langle\chi (t), u(t)\rangle\,dt.
		\end{eqnarray*}
		Hence, from this and the theory of monotone operators (see \cite[ Remark 3.2.2]{Zh}), we conclude 
		\begin{equation}\label{Con}
			-\text{div} \left(|\nabla u|^{p-2}\nabla u\right)=\chi.
		\end{equation}
		Replacing $\eqref{Con}$ in $\eqref{Sl}$ yields
		\begin{equation}\label{SSl}
			\int_{\Omega} u_{t}(t)v\,dx+\int_{ \Omega} |\nabla u(t)|^{p-2}\nabla u(t).\nabla v\,dx =\int_{\Omega} f(u(t))v\,dx,\; \text{a.e. in}\;(0, T).
		\end{equation}
		As $T>0$ is arbitrary, it follows that 
		\begin{equation}\label{SSl*}
			\int_{\Omega} u_{t}(t)v\,dx+\int_{ \Omega} |\nabla u(t)|^{p-2}\nabla u(t).\nabla v\,dx =\int_{\Omega} f(u(t))v\,dx,\; \text{a.e. in}\; (0,+\infty),
		\end{equation}
		for all $v\in W_{0,rad}^{1, p}(\Omega)$.
		
		\begin{claim} \label{generalsolution}
			\begin{equation}\label{SSl**}
				\int_{\Omega} u_{t}(t)v\,dx+\int_{ \Omega} |\nabla u(t)|^{p-2}\nabla u(t).\nabla v\,dx =\int_{\Omega} f(u(t))v\,dx,\; \text{a.e. in}\; (0,+\infty),
			\end{equation}
			for all $v\in W_{0}^{1, p}(\Omega) \cap L^{2}(\Omega)$.	
		\end{claim}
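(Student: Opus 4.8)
The plan is to upgrade the identity \eqref{SSl*}, already established for radial test functions $v\in W_{0,rad}^{1,p}(\Omega)$, to arbitrary $v\in W_{0}^{1,p}(\Omega)\cap L^{2}(\Omega)$ by a symmetrization argument that exploits the fact that the limit $u$ is radial. Indeed, from \eqref{LIm} we have $u\in L^{\infty}(0,T;W_{0,rad}^{1,p}(\Omega))$ and $u_{t}\in L^{2}(0,T;L_{rad}^{2}(\Omega))$, so for a.e.\ $t$ the functions $u(t)$, $u_{t}(t)$ and $f(u(t))$ are all radial, and the vector field $W(t):=|\nabla u(t)|^{p-2}\nabla u(t)$ is $O(N)$-equivariant, that is, $W(t)(Rx)=R\,W(t)(x)$ for every $R\in O(N)$, a direct consequence of the rotational invariance of $u(t)$.

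Let $\mu$ denote the normalized Haar measure on the orthogonal group $O(N)$ and, for $v\in W_{0}^{1,p}(\Omega)\cap L^{2}(\Omega)$, define the averaging operator
$$
Pv(x)=\int_{O(N)} v(Rx)\,d\mu(R).
$$
Because $\Omega$ is rotationally invariant by \eqref{omega}, each map $x\mapsto Rx$ is an isometry of $\Omega$, so $v\circ R\in W_{0}^{1,p}(\Omega)$ with $\|v\circ R\|_{1,p}=\|v\|_{1,p}$; consequently $Pv\in W_{0,rad}^{1,p}(\Omega)$, with $\nabla(Pv)(x)=\int_{O(N)}R^{T}\nabla v(Rx)\,d\mu(R)$, and $Pv$ is radial by invariance of $\mu$. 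Moreover a short Fubini computation shows that $P$ is self-adjoint on $L^{2}(\Omega)$ and fixes radial functions, whence for any radial $g\in L^{2}(\Omega)$ one has $\int_{\Omega}g\,v\,dx=\int_{\Omega}g\,(Pv)\,dx$.

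Applying this last identity with $g=u_{t}(t)$ and with $g=f(u(t))$ converts the time-derivative term and the reaction term for $v$ into the corresponding terms for $Pv$. For the principal term I would use the equivariance of $W(t)$ together with the change of variables $y=Rx$ and the invariance of $\mu$ to obtain
$$
\int_{\Omega}W(t)\cdot\nabla(Pv)\,dx=\int_{O(N)}\!\int_{\Omega}\big(R\,W(t)(R^{T}y)\big)\cdot\nabla v(y)\,dy\,d\mu(R)=\int_{\Omega}W(t)\cdot\nabla v\,dx,
$$
where the last equality uses $R\,W(t)(R^{T}y)=W(t)(y)$. Since $Pv\in W_{0,rad}^{1,p}(\Omega)$, the identity \eqref{SSl*} holds with $Pv$ in place of $v$; substituting the three equalities just derived then yields \eqref{SSl**} for the original $v$. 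The main obstacle is precisely this principal-term identity: it requires verifying the $O(N)$-equivariance of the degenerate field $|\nabla u|^{p-2}\nabla u$ and justifying the interchange of the Haar integral with the spatial integral through Fubini and the measure-preserving change of variables, after which the radial and non-radial test functions produce the same value and the claim follows.
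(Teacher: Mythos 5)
Your argument is correct, but it is a genuinely different route from the paper's. The paper proves Claim \ref{generalsolution} by invoking the non-smooth Palais principle of symmetric criticality due to Squassina: it fixes a good time $t$, truncates $f$ at the level $M=\|u(t)\|_\infty+1$ (this truncation is forced on the authors because $(f_1)$--$(f_2)$ impose no upper growth bound, so the term $\int_\Omega F(w)\,dx$ need not be finite for a general non-radial $w\in W_0^{1,p}(\Omega)\cap L^2(\Omega)$), builds a $C^1$ functional $J$ on $W_0^{1,p}(\Omega)\cap L^2(\Omega)$ whose restriction to the radial subspace has $u(t)$ as a critical point, and then concludes that $u(t)$ is critical on the whole space. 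Your proof replaces this with a direct $O(N)$-averaging of the test function: since $u(t)$, $u_t(t)$ and $f(u(t))$ are radial and $|\nabla u(t)|^{p-2}\nabla u(t)$ is $O(N)$-equivariant, each term evaluated at $Pv$ equals the corresponding term at $v$, so \eqref{SSl*} applied to $Pv\in W_{0,rad}^{1,p}(\Omega)$ yields \eqref{SSl**} for $v$. This is in effect a hands-on proof of symmetric criticality for this compact group action, and it buys two things: it avoids the truncation trick entirely (you never need an energy functional, only the weak identity), and it removes the dependence on the external reference. The steps you flag as needing justification are all routine: $R\mapsto v\circ R$ is continuous into $W_0^{1,p}(\Omega)\cap L^2(\Omega)$ with constant norm, so $Pv$ is a well-defined Bochner integral and $\nabla$ commutes with it, giving $Pv\in W_{0,rad}^{1,p}(\Omega)\cap L^2(\Omega)$ and the stated formula for $\nabla(Pv)$; Fubini applies because $\int_{O(N)}\int_\Omega|W(t)(x)||\nabla v(Rx)|\,dx\,d\mu(R)\le\|\nabla u(t)\|_p^{p-1}\|\nabla v\|_p<\infty$; and the equivariance $W(t)(Rx)=R\,W(t)(x)$ holds a.e.\ in $x$ for each $R$ because $u(t)\circ R=u(t)$ implies $\nabla u(t)(Rx)=R\nabla u(t)(x)$ a.e. The only point you share with the paper and should state explicitly is that, by separability of $W_{0,rad}^{1,p}(\Omega)$, the exceptional null set of times in \eqref{SSl*} can be chosen independently of the radial test function before you fix $t$ and average.
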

		
		In order to prove Claim \ref{generalsolution}, we will use the Palais’ principle due to Squassina \cite[Theorem 4]{Squassina}. However, since the energy functional $E_p$ given in (\ref{Ep}) is not well defined in whole $v\in W_{0}^{1, p}(\Omega)$, we cannot use this principle directly in our problem. Here, we need to do the following trick: First of all, we fix $t >0$ such that equality in (\ref{SSl**}) is true. Setting $M=\|u(t)\|_\infty+1$, $g(x)=-u_t(t)(x)$ for all $x \in \Omega$ and  $h:\mathbb{R} \to \mathbb{R}$ by  
		$$
		h(t)=\left\{
		\begin{array}{l}
			f(M), \;\; \mbox{if} \;\; t \geq M, \\
			f(t), \;\; \mbox{if} \;\; |t| \leq M , \\
			f(-M), \;\; \mbox{if} \;\; t \leq -M,
		\end{array}
		\right.
		$$
		it follows that 
		\begin{equation}\label{SSl***}
			\int_{ \Omega} |\nabla u(t)|^{p-2}\nabla u(t).\nabla v\,dx =\int_{\Omega} h(u(t))v\,dx+\int_{\Omega}g(x)v\,dx,\; \forall v\in W_{0,rad}^{1, p}(\Omega). 
		\end{equation}
		Considering the functional $J:W_{0}^{1,p}(\Omega)\cap L^{2}(\Omega) \to \mathbb{R}$ given by 
		$$
		J(w)=\frac{1}{p}\int_{\Omega}|\nabla w|^{p}\,dx-\int_{\Omega}H(w)\,dx-\int_{\Omega}g(x)w(x)\,dx,
		$$
		where $H(t)=\int_{0}^{t}h(s)\,ds$ and the space $W_{0}^{1,p}(\Omega)\cap L^{2}(\Omega)$ endowed with its usual norm, that is, 
		$$
		\|u\|_{1,p,2}=\|\nabla u\|_p+\|u\|_2, \quad \forall u \in W_{0}^{1,p}(\Omega)\cap L^{2}(\Omega),
		$$
		which is a Banach space. A simple computation gives $J \in C^{1}(W_{0}^{1,p}(\Omega)\cap L^{2}(\Omega) ,\mathbb{R})$ and that $u(t)$ is a critical point of $J$ restricts to $W_{0,rad}^{1,p}(\Omega)$. Therefore, by Palais’ principle due to Squassina \cite{Squassina}, we deduce that $u(t)$ is a critical point of $J$ in whole $W_{0}^{1,p}(\Omega)\cap L^{2}(\Omega) $, that is, 
		\begin{equation}\label{SSl****}
			\int_{ \Omega} |\nabla u(t)|^{p-2}\nabla u(t).\nabla v\,dx =\int_{\Omega} h(u(t))v\,dx+\int_{\Omega}g(x)v\,dx,\; \forall v\in W_{0}^{1, p}(\Omega)\cap L^{2}(\Omega),
		\end{equation}
		or equivalently
		\begin{equation}\label{SSl*****}
			\int_{\Omega} u_{t}(t)v\,dx+\int_{ \Omega} |\nabla u(t)|^{p-2}\nabla u(t).\nabla v\,dx =\int_{\Omega} f(u(t))v\,dx,\; \forall v\in W_{0}^{1,p}(\Omega)\cap L^{2}(\Omega),
		\end{equation}
		showing the Claim \ref{generalsolution}.

		Next, we show that the solution $u$ satisfies the following energy inequality
		\begin{equation}\label{ESS}
			\int_{0}^{t}\|u_{\tau}(\tau)\|_{2}^{2}\,d\tau+E_{p}(u(t))\leq E_{p}(u_{0}), \;\text{a.e.}\; t\in [0,+\infty).
		\end{equation}
		To this end, let $\psi$ be a nonnegative function which belongs to $C_{0}([0,+\infty))$. From $\eqref{EA}$, 
		\begin{equation}\label{en2}
			\int_{0}^{T}\psi(t)\,dt\int_{0}^{T}\|u_{m\tau}(\tau)\|_{2}^{2}\,d\tau
			+\int_{0}^{T}E_{p}(u_{m}(t))\psi(t)\,dt=\int_{0}^{T}E(u_{m}(0))\psi(t)\,dt.
		\end{equation}
		The right-hand side of $\eqref{en2}$ converges to
		$$ \int_{0}^{T}E_{p}(u_{0})\psi(t)\,dt,$$
		as $m\rightarrow \infty$. The second term in the left-hand side $\int_{0}^{T}E_{p}(u_{m}(t))\psi(t)\,dt$ is lower semicontinuous with respect to the weak topology of $W_{0}^{1,p}(\Omega)$. Hence
		
		\begin{equation}\label{est}
			\int_{0}^{T}E_{p}(u(t))\psi(t)\,dt\leq \liminf_{m\rightarrow \infty} \int_{0}^{T}E_{p}(u_{m}(t))\psi(t)\,dt.
		\end{equation}
		Thus, the proof is now complete.
		
	\end{description}

	\subsection{Existence of solution for (\ref{P1})}
	
	In what follows, we set $p_m \to 1^+$ and $u_m=u_{p_m}$ the solution obtained in the last subsection, that is,  
	$$
	u_{m}\in  L^{\infty}(0,+\infty;W_{0,rad}^{1,p_m}(\Omega)),
	$$
	$$
	u_{mt} \in L^{2}(0,+\infty;L_{rad}^{2}(\Omega)),
	$$
	and
	\begin{equation}\label{SSl*2}
		\int_{\Omega} u_{mt}(t)v\,dx+\int_{ \Omega} |\nabla u_m(t)|^{p-2}\nabla u_m(t).\nabla v\,dx =\int_{\Omega} f(u_m(t))v\,dx,\; \text{a.e. in}\; (0,+\infty),
	\end{equation}
	for all $v\in W_{0}^{1, p_m}(\Omega) \cap L^{2}(\Omega)$ and $m \in \mathbb{N}$. Moreover, we also have 
	\begin{equation}\label{ESTZe}
		\int_{ \Omega} |\nabla u_{m}(t)|^{p_m}\,dx \leq \frac{\theta p_md_{p_m}}{\theta -p_m}, \;\; \;  \;\;\; \int_{0}^{t}\|u_{ms}(s)\|_{2}\,ds \leq  d_{p_m}\;\; \mbox{and} \quad \|u_{m}(t)\|_{2}\leq \|u_{0}\|_2, \; \mbox{for} \; t\in [0,+\infty).
	\end{equation}
	
	Since $\theta>1$, $p_m \to 1^+$ and $(d_{p_m})$ is bounded by Lemma \ref{boundednessdp}, there is $C_1>0$ such that 
	\begin{equation}\label{ESTZe2}
		\int_{ \Omega} |\nabla u_{m}(t)|^{p_m}\,dx< C_1, \;\; \; \int_{0}^{t}\|u_{ms}(s)\|^2_{2}\,ds< C_1 ,\;\;\; \|u_{m}(t)\|_{2}\leq \|u_{0}\|_2
		, \; \forall  \; t\in [0,\infty) \; \mbox{and} \; m \in \mathbb{N}.
	\end{equation}
	By Young inequality, 
	\begin{equation} \label{NEWEQUATIONZW} 
		\int_{\Omega}|\nabla u_m(t)|\,dx \leq \frac{1}{p_m} \int_{\Omega}|\nabla u_m(t)|^{p_m}\,dx +\frac{p_m-1}{p_m}|\Omega|,\; \forall t\in [0,+\infty) \; \mbox{and} \; m \in \mathbb{N}.
	\end{equation}
	Hence, there is $C_2>0$ such that 
	\begin{equation}\label{ESTZe1}
		\int_{ \Omega} |\nabla u_{m}(t)|\,dx \leq C_2,\; \forall t\in [0,\infty) \; \mbox{and} \; m \in \mathbb{N}. 
	\end{equation}
	
	Using  H\"older's inequality and (\ref{ESTZe2}), we get
	\begin{equation}\label{L1}
		\|u_{m}(t)\|_{1}\leq |\Omega|^{1/2}\|u_{m}\|_{2}\leq |\Omega|^{1/2}\|u_{0}\|_{2}, \; \mbox{for all } \;t \in [0,+\infty) \; \mbox{and} \; m \in\mathbb{N},
	\end{equation}
	showing that $(u_m)$ is a bounded sequence in $L^{\infty}(0,+\infty;L^{1}(\Omega))$. Recalling that the usual norm in $BV(\Omega)$ is 
	$$
	\|u\|_{BV(\Omega)}=\int_{\Omega}|Du|+\|u\|_{1}, \quad \forall u \in BV(\Omega),
	$$
	it follows from (\ref{ESTZe1}) and (\ref{L1}) that there is $C_4>0$ such that
	$$
	\int_{\Omega}|\nabla u_m|\,dx+\int_{\Omega}|u_m|\,dx\leq C_4, \quad \forall t \in (0,+\infty),
	$$
	showing that $(u_m)$ is bounded in $L^{\infty}(0,T;BV_{rad}(\Omega))$. Then, by Lemma \ref{LeM}, we derive that $(u_m)$ is also bounded in $L^{\infty}(0,T;L^{\infty}(\Omega))$. Moreover, this implies that  $(f(u_m))$ is a bounded sequence in $L^{\infty}(0,+\infty; L^{\infty}(\Omega))$. 
	
	As an immediate consequence of the above analysis, we deduce that
	\begin{equation}\label{LIm*}
		\left\{\begin{array}{lcl}
			u_{m} \stackrel{*}{\rightharpoonup} u & \text{in}& L^{\infty}(0,T;BV_{rad}(\Omega)),\\
			u_{mt}\rightharpoonup u_{t} &\text{in} & L^{2}(0,T;L_{rad}^{2}(\Omega)). \\
		\end{array}\right.
	\end{equation}
	
	By Lemma \ref{LeM} and \cite[Corollary 4, p. 85]{JS} for all $T> 0$ we have
	
	\begin{equation}\label{T1Ze}
		u_{m}\rightarrow u\;\;\text{in}\; C([0,T], L^{\kappa}(\Omega)),  \;\forall \kappa\in [1, \infty],
	\end{equation}
	and
	\begin{equation}\label{ALZe}
		u_{m}(x,t)\rightarrow u(x,t) \;\;\text{a.e.}\; x\in \Omega, \;\forall t\geq0.
	\end{equation}
	In particular $u_m(t) \to u(t)$ in $L^{1}(\Omega)$ for all $t \in [0,+\infty)$. From this, $u(t) \in BV(\Omega)$ for all $t \in [0,+\infty)$,  
	$$
	\liminf_{m \to +\infty}\int_{\Omega}\varphi| \nabla u_m(t)|\,dx \geq  \int_{\Omega}\varphi|Du(t)|\,dx, \quad \forall 0 \leq \varphi \in C_{0}^{1}(\Omega) \;\;\mbox{and} \;\; \forall  t \in [0,+\infty) \;\; (\mbox{see \cite{Ambrosio}}\;\; ).
	$$
	Moreover, from (\ref{secondnorm}) and (\ref{NEWEQUATIONZW}), 
	\begin{equation} \label{pmineqaulity}
		\liminf_{m \to +\infty}\int_{\Omega}| \nabla u_m(t)|^{p_m}\,dx \geq \liminf_{m \to +\infty}\left(\int_{\Omega}| \nabla u_m(t)|\,dx+\int_{\partial\Omega} |u_m|\, d\mathcal{H}^{N-1} \right) \geq \|u(t)\|, \quad \forall t \in [0, +\infty)
	\end{equation}
	and by (\ref{ALZe}),
	\begin{equation}\label{RS1Ze}
		f(u_{m}) \stackrel{*}{\rightharpoonup} f(u)\;\;\text{in}\; L^{\infty}(0, +\infty; L^{s}(\Omega)), \quad \forall s \in (1,\infty).
	\end{equation}
	
	\begin{claim} \label{newclaim} There are a vector field $z \in L^{\infty}(0,+\infty; L^{\infty}(\Omega))$ with $\mbox{div} \, z(t) \in L^{2}(\Omega)$  such that, up to subsequence,
		$$
		|\nabla u_m|^{p_n-2}\nabla u_n \rightharpoonup z \quad \mbox{in} \quad  L^{s}(0,T;L^{s}(\Omega)), \quad \forall s > 1 \quad \mbox{and} \quad \forall T>0, \leqno{(i)}
		$$	
		$$
		|z(t)|_\infty \leq 1,\quad \forall t>0, \leqno{(ii)}
		$$
		$$	
		(z(t),Du(t))=|Du(t)|, \quad \mbox{as measures on} \quad \Omega, \; \text{a.e. in}\; (0,+\infty), \leqno{(iii)}
		$$
		$$ [z(t), \nu]\in \text{sign}(-u(t))\;\;\mathcal{H}^{N-1}-\text{a.e}\;\;\text{on}\;\partial \Omega, \leqno{(iv)}
		$$
		and
		$$
		u_t(t)	-div \, z(t)=f(u(t)), \quad \mbox{in} \quad \mathcal{D}'(\Omega), \; \text{a.e. in}\; (0,+\infty). \leqno{(v)}
		$$
	\end{claim}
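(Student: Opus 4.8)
The plan is to first extract the limiting field $z$, then to establish $(ii)$ and $(v)$, and finally to obtain $(iii)$ and $(iv)$ simultaneously from a single energy identity.

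\emph{Construction of $z$ and the bound $(ii)$.} Fix $s>1$ and set $Q_T=\Omega\times(0,T)$. Splitting $\Omega$ according to whether $|\nabla u_m(t)|\le 1$ or $|\nabla u_m(t)|>1$, and using that $(p_m-1)s\le p_m$ for $m$ large (which holds since $p_m\to 1^+$), I obtain $\int_\Omega |\nabla u_m(t)|^{(p_m-1)s}\,dx\le |\Omega|+\int_\Omega|\nabla u_m(t)|^{p_m}\,dx\le |\Omega|+C_1$ by $(\ref{ESTZe2})$. Since $\bigl||\nabla u_m|^{p_m-2}\nabla u_m\bigr|=|\nabla u_m|^{p_m-1}$, the sequence $(|\nabla u_m|^{p_m-2}\nabla u_m)_m$ is bounded in $L^s(0,T;L^s(\Omega))$ for every $s>1$; by reflexivity together with a diagonal argument over $s\uparrow\infty$ and the compatibility of the weak limits on the finite-measure set $Q_T$ (weak $L^{s_2}$-convergence implies weak $L^{s_1}$-convergence when $s_1<s_2$), I get a single $z$ with $|\nabla u_m|^{p_m-2}\nabla u_m\rightharpoonup z$ in each $L^s(0,T;L^s(\Omega))$, which is $(i)$. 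For $(ii)$, lower semicontinuity of the $L^s$-norm under weak convergence gives $\|z\|_{L^s(Q_T)}\le \liminf_m \bigl\||\nabla u_m|^{p_m-1}\bigr\|_{L^s(Q_T)}\le \bigl(T(|\Omega|+C_1)\bigr)^{1/s}$, and letting $s\to\infty$ yields $\|z\|_{L^\infty(Q_T)}\le 1$; hence $|z(t)|_\infty\le 1$ for a.e. $t$ and $z\in L^{\infty}(0,+\infty;L^{\infty}(\Omega))$.

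\emph{The equation $(v)$.} To obtain $(v)$ I pass to the limit in $(\ref{SSl*2})$. For $v\in C_0^\infty(\Omega)$ and $\phi\in C_0^\infty(0,T)$, the term $\int_\Omega u_{mt}v\,dx$ passes to the limit by $u_{mt}\rightharpoonup u_t$ in $L^2(0,T;L^2_{rad}(\Omega))$, the diffusion term $\int_\Omega |\nabla u_m|^{p_m-2}\nabla u_m\cdot\nabla v\,dx$ by $(i)$ (pairing the weak limit against $\nabla v\in L^{s'}(Q_T)$), and the reaction term by $(\ref{RS1Ze})$. This gives $\int_\Omega u_t(t)v\,dx+\int_\Omega z(t)\cdot\nabla v\,dx=\int_\Omega f(u(t))v\,dx$ for a.e. $t$, that is, $u_t(t)-\text{div}\,z(t)=f(u(t))$ in $\mathcal{D}'(\Omega)$ a.e. $t$, proving $(v)$. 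In particular $\text{div}\,z(t)=u_t(t)-f(u(t))\in L^2(\Omega)$ a.e. $t$, since $u_t(t)\in L^2(\Omega)$ and $f(u(t))\in L^\infty(\Omega)\subset L^2(\Omega)$, so $z(t)\in X(\Omega)$ and the pairing $(z(t),Du(t))$ is a well-defined Radon measure.

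\emph{The energy identity and $(iii)$--$(iv)$.} This is the heart of the argument. Testing the equation for $u_m=u_{p_m}$ with $u_m$ itself, which is licit because $u_{mt}\in L^2(0,T;L^2(\Omega))$ so that $t\mapsto\|u_m(t)\|_2^2$ is absolutely continuous, and integrating over $(0,T)$ yields the energy identity $\int_0^T\int_\Omega|\nabla u_m|^{p_m}\,dx\,dt=\tfrac12\|u_0\|_2^2-\tfrac12\|u_m(T)\|_2^2+\int_0^T\int_\Omega f(u_m)u_m\,dx\,dt$. Using $u_m\to u$ in $C([0,T],L^2(\Omega))$ from $(\ref{T1Ze})$ and $f(u_m)u_m\to f(u)u$ in $L^1(Q_T)$, the left-hand side converges to $\mathcal L:=\tfrac12\|u_0\|_2^2-\tfrac12\|u(T)\|_2^2+\int_0^T\int_\Omega f(u)u\,dx\,dt$. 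On the other hand, applying the Green formula $(\ref{Gree})$ with $w=u(t)\in BV(\Omega)$ and substituting $\text{div}\,z(t)=u_t(t)-f(u(t))$ from $(v)$, then integrating over $(0,T)$, gives $\int_0^T\int_\Omega (z(t),Du(t))\,dt=\mathcal L+\int_0^T\int_{\partial\Omega}u\,[z(t),\nu]\,d\mathcal{H}^{N-1}\,dt$. Finally, $(\ref{pmineqaulity})$ and Fatou's lemma give $\mathcal L\ge \int_0^T\bigl(\int_\Omega|Du(t)|+\int_{\partial\Omega}|u(t)|\,d\mathcal{H}^{N-1}\bigr)\,dt$. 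Combining these, and using $\|[z,\nu]\|_\infty\le\|z\|_\infty\le 1$ so that $\int_{\partial\Omega}|u|+\int_{\partial\Omega}u\,[z,\nu]\,d\mathcal{H}^{N-1}\ge 0$, I deduce $\int_0^T\int_\Omega (z,Du)\,dt\ge \int_0^T\int_\Omega |Du|\,dt$; since $|(z(t),Du(t))|\le|z(t)|_\infty|Du(t)|\le|Du(t)|$ as measures (Section~2) gives the reverse bound, all the intermediate inequalities are in fact equalities. The boundary equality forces $u\,[z(t),\nu]=-|u|$, i.e. $[z(t),\nu]\in\text{sign}(-u(t))$ $\mathcal{H}^{N-1}$-a.e., which is $(iv)$; and $\int_0^T\int_\Omega(z,Du)\,dt=\int_0^T\int_\Omega|Du|\,dt$ together with the pointwise bound $(z(t),Du(t))\le|Du(t)|$ as measures forces $(z(t),Du(t))=|Du(t)|$ as measures for a.e. $t$ (a nonnegative measure with zero total mass vanishes), which is $(iii)$.

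\emph{Main obstacle.} The delicate point is the treatment of the boundary term: the identity must be routed through the Anzellotti pairing $(z,Du)$ and the Green formula $(\ref{Gree})$ rather than a naive integration by parts, and the single inequality $\|z\|_\infty\le 1$ must be used so that the interior slack $\int_\Omega(|Du|-(z,Du))$ and the boundary slack $\int_{\partial\Omega}(|u|+u[z,\nu])$ vanish \emph{simultaneously}. A secondary technical point is the justification of the energy identity for $u_m$ (via a Steklov-averaging of the weak formulation) and the passage to the limit in its quadratic endpoint terms, which rely on the strong convergence in $C([0,T],L^2(\Omega))$.
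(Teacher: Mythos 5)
Your proposal is correct. Parts $(i)$, $(ii)$ and $(v)$ coincide with the paper's own argument: the same $L^{s}$-bound on $|\nabla u_m|^{p_m-1}$ via $s(p_m-1)\le p_m$ for large $m$, weak compactness, the limit $s\to\infty$ to get $\|z\|_{\infty}\le 1$, and passage to the limit in the weak formulation using (\ref{limitez}) and (\ref{RS1Ze}). For $(iii)$--$(iv)$, however, you take a genuinely different route. The paper proves $(iii)$ first, by testing the $p_m$-equation against the localized functions $u_m\varphi$ with $0\le\varphi\in C_0^1(\Omega)$ and using lower semicontinuity of $w\mapsto\int_\Omega\varphi\,|Dw|$ to obtain $-\int_\Omega u\,\mathrm{div}\,z(t)\,\varphi-\int_\Omega u\,z(t)\cdot\nabla\varphi\ge\int_\Omega\varphi\,|Du(t)|$, i.e.\ $(z(t),Du(t))\ge|Du(t)|$ directly as an inequality of measures on every Borel set; it then obtains $(iv)$ separately by testing with $u_m-\varphi$, invoking (\ref{pmineqaulity}) and Green's formula (\ref{Gree}). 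You instead pass to the limit in a single global energy identity (test function $u_m$ itself) and squeeze: combining $(z,Du)\le|Du|$, $u[z,\nu]\ge-|u|$, the Green formula, and the lower-semicontinuity bound $\mathcal L\ge\int_0^T\|u(t)\|\,dt$ forces the interior slack $\int_\Omega(|Du|-(z,Du))$ and the boundary slack $\int_{\partial\Omega}(|u|+u[z,\nu])\,d\mathcal H^{N-1}$ to vanish simultaneously, after which $(iii)$ follows because a nonnegative measure with zero total mass is zero. This is more economical -- one limit passage, no cutoffs -- and it is legitimate here precisely because (\ref{pmineqaulity}) already carries the boundary trace of $u$ (the traces of the $u_m$ vanish, so the extension-by-zero argument of Lemma \ref{LeM} is what produces the $\int_{\partial\Omega}|u|$ term); what the paper's localized argument buys is the measure-level inequality $(z,Du)\ge|Du|$ without appeal to the global energy balance, which is the standard presentation following \cite{sergio}. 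Your remaining steps (Fatou in $t$ on top of (\ref{pmineqaulity}), the identification of $\mathrm{div}\,z(t)=u_t(t)-f(u(t))\in L^2(\Omega)$ so that the Anzellotti pairing and (\ref{Gree}) apply to $u(t)\in BV_{rad}(\Omega)\hookrightarrow L^\infty(\Omega)$, and the endpoint terms handled via $u_m\to u$ in $C([0,T],L^2(\Omega))$ from (\ref{T1Ze})) are all sound.
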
	
	Let us prove this claim. For each $s>1$, there exists $m_0=m_0(s) \in \mathbb{N}$ such that $s(p_m-1)<p_m$ for all $m \geq m_0$. Thus, for each $T>0$, $|\nabla u_m|^{p_m-2}\nabla u_m  \in L^s(0,T;L^{s}(\Omega))$ for all $m \geq m_0$ and 
	$$
	\left(\int_{0}^{T}\||\nabla u_m(t)|^{p_m-2}\nabla u_m(t) \|^{s}_{L^{s}(\Omega)}\,dt\right)^{\frac{1}{s}} \leq |\Omega|^{\frac{1}{s}-\frac{p_m-1}{p_m}}\left(\int_{0}^{T}\|\nabla u_m\|_{p_m}^{(p_m-1)s}\,dt\right)^{\frac{1}{s}}, \quad \forall m \geq m_0.
	$$ 
	Thus, by (\ref{ESTZe2}),  
	\begin{equation} \label{NEWESTZ} 
		\left(\int_{0}^{T}\||\nabla u_m(t)|^{p_m-2}\nabla u_m(t) \|^{s}_{L^{s}(\Omega)}\,dt\right)^{\frac{1}{s}} \leq M^{\frac{(p_m-1)}{p_m}}|\Omega|^{\frac{1}{s}-\frac{p_m-1}{p_m}}T^{\frac{1}{s}}, \quad \forall m \geq n_0,
	\end{equation} 
	where $M=\displaystyle \sup_{m \in\mathbb{N}}\int_{\Omega}|\nabla v_m|^{p_m}\,dx$. Since $L^s(0,T;L^{s}(\Omega))$ is reflexive, there is $z \in L_{loc}^s(0,+\infty;L^{s}(\Omega))$, for all $s>1$, such that 
	\begin{equation} \label{limitez}
		|\nabla u_m|^{p_m-2}\nabla u_m  \rightharpoonup z \quad \mbox{in} \quad L^s(0,T;L^{s}(\Omega)), \quad \forall T>0.
	\end{equation} 
	The last limit combined with (\ref{NEWESTZ}) gives 
	$$
	|z|_{L^s(0,T;L^{s}(\Omega))} \leq (|\Omega|T)^{\frac{1}{s}}, \quad \forall s >1,
	$$
	from where it follows that $z \in  L^{\infty}(0,T;L^{\infty}(\Omega))$ with 
	$$
	|z|_{L^{\infty}(0,T;L^\infty(\Omega))}\leq 1, \quad \forall T>0.
	$$
	Hence, $z \in  L^{\infty}(0,+\infty;L^\infty(\Omega)))$ with $|z(t)|_{\infty}\leq 1$ for all $t >0$. 
	Finally the equality below
	$$
	\int_{\Omega}u_{mt}(t) \varphi\,dx+\int_{\Omega}|\nabla u_m(t)|^{p_m-2}\nabla u_m  \nabla \varphi \,dx =\int_{\Omega}f(u_m) \varphi\,dx, \quad \forall \varphi \in C_{0}^{1}(\Omega)
	$$
	leads to
	$$
	\int_{0}^{T}\int_{\Omega}u_{mt}(t) \varphi\,dxdt+\int_{0}^{T}\int_{\Omega}|\nabla u_m(t)|^{p_m-2}\nabla u_m  \nabla \varphi \,dxdt =\int_{0}^{T}\int_{\Omega}f(u_m) \varphi\,dxdt, 
	$$
	for all $\varphi \in C_{0}^{1}(\Omega)$ and $T>0.$ This together with the limits (\ref{limitez}) and (\ref{RS1Ze}) gives
	$$
	\int_{0}^{T}\int_{\Omega}u_{t}(t) \varphi\,dxdt+\int_{0}^{T}\int_{\Omega}z(t) \nabla \varphi \,dxdt =\int_{0}^{T}\int_{\Omega}f(u) \varphi\,dxdt, \quad \forall \varphi \in C_{0}^{1}(\Omega) \quad \mbox{and} \quad T>0,
	$$
	that is, 
	$$
	u_t(t)	-div \, z(t)=f(u(t)), \quad \mbox{in} \quad \mathcal{D}'(\Omega) \; \text{a.e. in}\; (0,+\infty),
	$$
	showing $(v)$, and that $\mbox{div} \, z(t) \in L^{2}(\Omega)$. Finally, in  order to prove $(iii)-(iv)$, we will adapt some arguments developed in \cite[page 57]{sergio}. As mentioned in \cite{sergio}, the item $(iii)$ follows if we prove the below inequality  
	$$
	-\int_{\Omega}u \mbox{div}z(t)\,dx -\int_{\Omega}uz(t)\nabla \varphi \,dx \geq \int_{\Omega}|Du(t)|\varphi, \quad \mbox{for all} \quad 0 \leq \varphi \in C_{0}^{1}(\Omega) \quad  \text{a.e. in}\; (0,+\infty). 
	$$
	From definition of $u_m$, it follows that for $0 \leq \varphi \in C_{0}^{1}(\Omega)$ and $T>0$, 
	$$
	\int_{0}^{T}\int_{\Omega}u_{mt}(t)u_m(t) \varphi\,dxdt+\int_{0}^{T}\int_{\Omega}|\nabla u_m(t)|^{p_m-2}\nabla u_m  \nabla( u_m\varphi) \,dxdt =\int_{0}^{T}\int_{\Omega}f(u_m) u_m\varphi\,dxdt,  
	$$
	and so, 
	$$
	\begin{array}{l}
		\displaystyle \int_{0}^{T}\int_{\Omega}u_{mt}(t)u_m(t) \varphi\,dxdt+\int_{0}^{T}\int_{\Omega}|\nabla u_m(t)|^{p_m}\varphi \,dxdt + \int_{0}^{T}\int_{\Omega}|\nabla u_m(t)|^{p_m-2}u_m(t)\nabla u_m(t)\nabla\varphi \,dxdt \\
		\mbox{} \\
		=\displaystyle \int_{0}^{T}\int_{\Omega}f(u_m(t)) u_m(t)\varphi\,dxdt.  	
	\end{array}
	$$
	A direct computation gives 
	$$
	\int_{0}^{T}\int_{\Omega}u_{mt}(t)u_m(t) \varphi\,dxdt \to \int_{0}^{T}\int_{\Omega}u_{t}(t)u(t) \varphi\,dxdt
	$$
	and 
	$$
	\int_{0}^{T}\int_{\Omega}f(u_m(t)) u_m(t)\varphi\,dxdt \to \int_{0}^{T}\int_{\Omega}f(u(t)) u(t)\varphi\,dxdt.
	$$
	Finally, we would like to point out that by lower semicontinuity,
	$$
	\liminf_{n \to +\infty}\int_{\Omega}|\nabla u_m(t)|^{p_m}\varphi\,dx \geq \liminf_{n \to +\infty}\int_{\Omega}\varphi|\nabla u_m(t)|\,dx\geq \int_{\Omega}\varphi|D u(t)| \quad \text{ in}\; (0,+\infty).
	$$
	The above analysis implies that
	$$
	-\int_{0}^{T}\int_{\Omega}u \mbox{div}z(t)\,dxdt -\int_{\Omega}uz(t)\nabla \varphi \,dxdt \geq \int_{0}^{T}\int_{\Omega}|Du|\varphi, \quad \forall T>0.
	$$
	Therefore,
	$$
	-\int_{\Omega}u \mbox{div}z(t)\,dx -\int_{\Omega}uz(t)\nabla \varphi \,dx \geq \int_{\Omega}|Du|\varphi, \quad \mbox{for all} \quad 0 \leq \varphi \in C_{0}^{1}(\Omega) \quad  \text{a.e. in}\; (0,+\infty),
	$$
	proving $(iii)$.
	
	Now, in order to prove $(iv)$, as showed in \cite{sergio}, it is enough to prove that
	$$
	\int_{\partial \Omega}\left(|u(t)|+u[z(t), \nu]\right)\,d\mathcal{H}^{N-1}\leq 0 \quad  \text{a.e. in}\; (0,+\infty).
	$$
	Using $(u_m-\varphi)$ as a test function, we get
	$$
	\begin{array}{l}
		\displaystyle \int_{0}^{T}\int_{\Omega}u_{mt}(t)(u_m(t)-\varphi)\,dxdt+\int_{0}^{T}\int_{\Omega}|\nabla u_m(t)|^{p_m} \,dxdt - \int_{0}^{T}\int_{\Omega}|\nabla u_m(t)|^{p_m-2}\nabla u_m(t)\nabla\varphi \,dxdt \\
		\mbox{} \\
		=\displaystyle \int_{0}^{T}\int_{\Omega}f(u_m(t)) (u_m(t)-\varphi)\,dxdt, \quad \forall T>0.  	
	\end{array}
	$$
	Letting $m \to +\infty$ and using (\ref{pmineqaulity}), we find
	$$
	\begin{array}{l}
		\displaystyle \int_{0}^{T}\int_{\Omega}|Du(t)|+\int_{0}^{T}\int_{\partial \Omega}|u(t)|\,d\mathcal{H}^{N-1} \leq -\int_{0}^{T}\int_{\Omega}u_t(t)(u-\varphi) \,dxdt+ \int_{0}^{T}\int_{\Omega}z(t)\nabla \varphi \,dxdt  \\
		\mbox{} \\
		- \displaystyle \int_{0}^{T}\int_{\Omega}f(u)\varphi\,dxdt+  \int_{0}^{T}\int_{\Omega}f(u)u\,dxdt,
	\end{array}
	$$
	that is,
	$$
	\begin{array}{l}
		\displaystyle \int_{0}^{T}\int_{\Omega}|Du(t)|\,dt+\int_{0}^{T}\int_{\partial \Omega}|u(t)|\,d\mathcal{H}^{N-1}dt \leq -\int_{0}^{T}\int_{\Omega}u_t(t)u(t)+ \displaystyle \int_{0}^{T}\int_{\Omega}f(u)u\,dxdt \\
		\mbox{}\\
		\hspace{7.3 cm} =-\displaystyle \int_{0}^{T}\int_{\Omega}\mbox{div}z(t)u(t)\,dxdt.
	\end{array}$$
	Then we have, by Green's formula (\ref{Gree}) 
	$$
	\displaystyle \int_{0}^{T}\int_{\Omega}|Du(t)|\,dt+\int_{0}^{T}\int_{\partial \Omega}|u(t)|\,d\mathcal{H}^{N-1} dt\leq -\int_{0}^{T}\int_{\partial\Omega}u(t)[z(t),\nu]\,d\mathcal{H}^{N-1}+\int_{0}^{T}\int_{\Omega}|Du(t)|\,dt
	$$
	which leads to
	$$
	\displaystyle \int_{0}^{T}\int_{\partial \Omega}\left(|u(t)|+u(t)[z(t),\nu]\right)\,d\mathcal{H}^{N-1}dt\leq 0, \quad \forall T>0.
	$$
	Therefore,
	$$
	\displaystyle \int_{\partial \Omega}(|u(t)|+u(t)[z(t),\nu])\,d\mathcal{H}^{N-1} \leq 0, \quad \mbox{a.e. in} \quad (0,+\infty),
	$$
	showing the desired result. In order to prove (\ref{ENRR1}) we can use a similar argument as in the proof of (\ref{en2}) and the weak lower semicontinuity of the total variation. Hence, the proof is now complete.
	
	\section{Proof of Theorem \ref{TH2}}
	In this section we are concerned with the proof Theorem \ref{TH2}. Here we just sketch the proof of this theorem since it proceeds as above.

	\begin{description}
		\item[Step 1] For $N\geq 2$,  we have the Gelfand triple
		$$ 
		W^{1,p}_{0}(\Omega)\cap L^{2}(\Omega)\hookrightarrow^{c,d} L^{2}(\Omega)\hookrightarrow^{c,d} \left(W^{1,p}_{0}(\Omega)\cap L^{2}(\Omega)\right)^{'}.
		$$
		In this section, we denote by $\|\,\,\,\|_{1,p,2}$ the usual norm in $W^{1,p}_{0}(\Omega)\cap L^{2}(\Omega)$ given by 
		$$
		\|u\|_{1,p,2}=\|\nabla u\|_{p}+\|u\|_2, \quad \forall u \in W^{1,p}_{0}(\Omega)\cap L^{2}(\Omega).
		$$

		Let $\{V_{m}\}_{m\in \mathbb{N}}$  be a Galerkin scheme of the separable  Banach space  $W_{0}^{1,p}(\Omega) \cap L^{2}(\Omega)$, i.e,
		\begin{equation}
			V_{m}=span\{w_{1}, w_{2}, \ldots, w_{m}\},\;\; \overline{\bigcup_{m\in \mathbb{N}}V_{m}}^{\|\,.\|_{1,p,2}}=W_{0}^{1,p}(\Omega) \cap L^{2}(\Omega),
		\end{equation}
		where $\{w_{j}\}_{j=1}^{\infty}$ is an orthonormal basis in $L^{2}(\Omega)$. Since $u_{0}\in W^{1,p}_{0}(\Omega)\cap L^{2}(\Omega)$ then we can find $u_{0m}\in V_{m}$ such that
		\begin{equation}\label{CVV1}
			u_{m}(0)=u_{0m}\rightarrow u_{0}\;\;\text{strongly in }\; W^{1,p}_{0}(\Omega)\cap L^{2}(\Omega) \;\text{as}\; m\rightarrow \infty.
		\end{equation}
		For each $m$, we look for the approximate solutions $ u_{m}(x,t)=\sum\limits_{j=1}^{m}g_{jm}(t)w_{j}(x)$ satisfying the following identities :
		\begin{equation}\label{g11}
			\int_{\Omega} u_{mt}(t)w_{j}\,dx+\int_{ \Omega} |\nabla u_{m}(t)|^{p-2}\nabla u_{m}(t).\nabla w_{j}\,dx=\int_{\Omega} f(u_{m}(t))w_{j}\,dx,\;\;j\in \{1, \ldots, m\},
		\end{equation}
		with the initial conditions
		\begin{equation}\label{g21}
			u_{m}(0)=u_{0m}.
		\end{equation}
		Then  $\eqref{g11}-\eqref{g21}$ is equivalent to the following initial value problem for a system of nonlinear ordinary differential equations on $g_{jm}$ :
		\begin{equation}\label{sys1}
			\left\{
			\begin{array}{l}
				g'_{jm}(t)=H_{j}(g(t)),\;j=1,2, \ldots, m,\;t\in[0,t_{0}],\\
				g_{jm}(0)=a_{jm},\;j=1,2,\ldots, m.
			\end{array}\right.
		\end{equation}
		where $H_{j}(g(t))=-\int_{ \Omega} |\nabla u_{m}(t)|^{p-2}\nabla u_{m}(t).\nabla w_{j}\,dx+\int_{\Omega}f( u_{m})w_{j}\,dx$.
		By the Picard iteration method, there is $t_{0,m}> 0$ depending on $|a_{jm}|$ such that problem $\eqref{sys}$ admits a unique local solution $g_{jm}\in C^{1}([0,t_{0,m}])$. Hereafter, we will assume that $[0,T_{0,m})$ is the maximal interval of existence of the solution $u_m(t)$. 
		\item[Step 2] Multiplying the $j^{th}$ equation in $\eqref{g11}$ by $g'_{jm}(t)$ and summing over $j$ from $1$ to $m$, we obtain  
		\begin{equation}\label{g31}
			\|u_{mt}(t)\|^{2}_{2}+\frac{d}{dt}E_p(u_{m}(t))=0, \;\;t\in [0,T_{0,m}).
		\end{equation}
		Integrating  $\eqref{g3}$ over $(0, t)$ yields 
		\begin{equation}\label{EA1}
			\int_{0}^{t}\|u_{mt}(s)\|^{2}_{2}\,ds+E_p(u_{m}(t))=E(u_{0m}), \; t\in [0, T_{0,m}).
		\end{equation}
		Since  $u_{0m}$ converges to $u_{0}$ strongly in $W_{0}^{1, p}(\Omega)\cap L^{2}(\Omega)$, by the continuity of $E$ it follows that 
		$$
		E_p(u_{0m})\rightarrow E_p(u_{0}),\;\text{as}\quad  m\rightarrow +\infty. 
		$$
		From the assumption that $E_p(u_{0})< d_p$,  we have $E_p(u_{0m})< d_p$ for sufficiently large $m$. This combined with $\eqref{EA}$ yields 
		\begin{equation}\label{vd1}
			E_p(u_{m}(t))< d_p, \quad t\in [0,T_{0,m}), 
		\end{equation}
		for sufficiently large $m$. Proceeding similarly to above, we immediately have 
		\begin{equation}\label{v11}
			u_{m}(t)\in W_p,\;\;\forall t\in [0,T_{0,m}).  
		\end{equation}
		Gathering $\eqref{vd}$ and $\eqref{v1}$ we deduce 
		\begin{equation}
			\int_{0}^{t}\|u_{ms}(s)\|_{2}^{2}\,ds+\left(\frac{1}{p}-\frac{1}{\theta}\right) \int_{ \Omega} |\nabla u_{m}(t)|^p\,dx< d_p, \;t\in [0, T_{0,m}).
		\end{equation}
		Thus, it turns out that
		\begin{equation}\label{EST1}
			\int_{ \Omega} |\nabla u_{m}(t)|^p\,dx< \frac{\theta pd_p}{\theta -p}, \;\; \; \int_{0}^{t}\|u_{ms}(s)\|^2_{2}\,ds< d_p,\;\;t\in [0, T_{0,m}).
		\end{equation}
		Now, multiplying the $j^{th}$ equation in $\eqref{g11}$ by $g_{jm}(t)$ and summing  up over $j=1, \ldots m$, we obtain
		\begin{equation*}
			\frac{1}{2}\frac{d}{dt}\|u_{m}(t)\|^{2}_{2}=-I_{p}(u_{m}(t))<0 \quad \forall t\in [0, T_{0,m}).
		\end{equation*}
		Hence, the function $t \mapsto \|u_m(t)\|_{2}^{2}$ for $t \in [0,T_{0,m})$ is decreasing, and so, 
		\begin{equation}\label{ET1}
			\|u_{m}(t)\|^{2}_{2}\leq \|u_{0}\|^{2}_{2},\;\;t\in [0, T_{0, m}),
		\end{equation}
		for $m$ large enough. The above estimates ensure that $T_{0,m}=+\infty$. Here we are using the fact that if $T_{0,m}<+\infty$, then we must have $\displaystyle \lim_{t \to T_{0,m}^{-}}\|u_m(t)\|_{1,p,2}=+\infty$, see \cite[Lemma 2.4, p. 48]{JAM}.

		\item[Step 3]
		From $\eqref{EST1}-\eqref{ET1}$ we get the existence of a function $u$ and a subsequence of $(u_{m})$ still denoted by $(u_{m})$ such that
		\begin{equation}\label{LIm1}
			\left\{\begin{array}{lcl}
				u_{m} \stackrel{*}{\rightharpoonup} u & \text{in}& L^{\infty}(0,T;W_{0}^{1,p}(\Omega)\cap L^{2}(\Omega)),\\
				u_{mt}\rightharpoonup u_{t} &\text{in} & L^{2}(0,T;L^{2}(\Omega)),\\
				-\text{div} \left(|\nabla u_{m}|^{p-2}\nabla u_{m}\right) \stackrel{*}{\rightharpoonup} \chi &\text{in} & L^{\infty}\left(0,T;\left(W_{0}^{1, p}(\Omega)\right)'\right).\\
			\end{array}\right.
		\end{equation}
		Moreover, from  $\eqref{EST1}$ and \cite[Corollary 4, p. 85]{JS} for all $T> 0$ we have
		
		\begin{equation}\label{T11}
			u_{m}\rightarrow u\;\;\text{in}\; C([0,T], L^{\kappa}(\Omega)),  \;\forall \kappa\in [1, p^{*})
		\end{equation}
		and
		\begin{equation}\label{AL1}
			u_{m}(x,t)\rightarrow u(x,t) \;\;\text{a.e.}\; x\in \Omega, \;\forall t\geq0.
		\end{equation}
		As $f$ is a continuous function, the limit (\ref{AL1}) yields that 
		\begin{equation}\label{T11**}
			f(u_{m}) \rightarrow f(u)\;\;\text{a.e.}\; x\in \Omega, \;\forall t\geq0.
		\end{equation}
		On the other hand, from $(f_{3})$, (\ref{ET1}) and the H\"older inequality,  
		\begin{equation}\label{K1}
			\int_{\Omega} |f(u_{m})|^{2}\,dx\leq 2C\left(|\Omega|+|\Omega|^{\frac{4-2q}{2}}\|u_{0}\|^{2q-2}_{2}\right)
		\end{equation}
		for $m$ large enough. Therefore, from (\ref{AL1}) and (\ref{K1}), 
		\begin{equation}\label{VC}
			f(u_{m})\stackrel{*}{\rightharpoonup} f(u)\;\;\;\text{in}\; L^{\infty}(0, T;L^{2}(\Omega)).
		\end{equation}
		For any fixed $j$, letting $m\rightarrow \infty$ in $\eqref{g11}$, we obtain
		\begin{eqnarray}\label{D11}
			\int_{0}^{t}\int_{\Omega} u_{\tau}(\tau)w_{j}\,dxd\tau+\int_{0}^{t}\langle \chi(\tau)), w_{j} \rangle\,d\tau=\int_{0}^{t}\int_{\Omega} f( u(\tau))w_{j}\,dxd\tau\;\;\forall t\in [0, T].
		\end{eqnarray}
		From the density of $V_{m}$ in $W_{0}^{1,p}(\Omega)\cap L^{2}(\Omega)$, it follows that for all $v\in W_{0}^{1, p}(\Omega)\cap L^{2}(\Omega)$
		\begin{equation}\label{Sl1}
			\int_{\Omega} u_{t}(t)v\,dx+\langle \chi (t),v\rangle =\int_{\Omega} f(u(t))v\,dx,\; \text{a.e.}\;t\in [0, T].
		\end{equation}
		By using a similar argument as in the proof of Theorem \ref{TH1}, one can shows that $u$ satisfies the first initial condition. Next step is to prove that
		\begin{equation}\label{CL11}
			-\text{div} \left(|\nabla u|^{p-2}\nabla u\right)=\chi.
		\end{equation}
		Since the proof of (\ref{CL11}) will be similar to that of Theorem \ref{TH1}, we only need to prove the following claim 
		\begin{claim}\label{CLM}
			$\int_{0}^{T}\int_{ \Omega} f(u_{m}(t))u_{m}(t)\,dxdt \rightarrow \int_{0}^{T}\int_{ \Omega} f(u(t))u(t)\,dxdt$ as $m\rightarrow \infty$. 
		\end{claim}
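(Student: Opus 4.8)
The plan is to read the left-hand integral as a duality pairing in conjugate Lebesgue spaces over the cylinder $Q_T:=\Omega\times(0,T)$ and to pass to the limit by combining the strong convergence of $(u_m)$ with a strong-convergence (equi-integrability) statement for $(f(u_m))$. Throughout I write $q'=q/(q-1)$ for the conjugate exponent of the number $q$ from $(f_3)$.

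First I would record the strong convergence I intend to use. Since $1<p<2\le N$ (recall $N\ge 2$, and $p<p_1<p_0<2$), one has $p^*=Np/(N-p)>1^*$, so the exponent $q\in(1,1^*)$ satisfies $q\in[1,p^*)$. Hence \eqref{T11} applies with $\kappa=q$ and yields $\sup_{t\in[0,T]}\|u_m(t)-u(t)\|_{q}\to 0$, which gives $u_m\to u$ strongly in $L^q(Q_T)$. Next I would establish that $(f(u_m))$ converges strongly in the conjugate space $L^{q'}(Q_T)$. The growth condition $(f_3)$ gives $|f(u_m)|^{q'}\le C\,(1+|u_m|^{q-1})^{q'}\le C\,(1+|u_m|^{q})$, where I used the identity $(q-1)q'=q$; integrating over $Q_T$ and invoking the bound just obtained shows that $(f(u_m))$ is bounded in $L^{q'}(Q_T)$. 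Together with the a.e.\ convergence $f(u_m)\to f(u)$ from \eqref{T11**}, a generalized dominated convergence argument (extract a subsequence dominated in $L^q(Q_T)$, apply the $(f_3)$ bound, and recover the full sequence by uniqueness of the limit) yields $f(u_m)\to f(u)$ strongly in $L^{q'}(Q_T)$; equivalently, this is the continuity of the superposition operator $v\mapsto f(v)$ from $L^q(Q_T)$ into $L^{q'}(Q_T)$ forced by $(f_3)$.

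With these two ingredients the conclusion is routine. I would split $f(u_m)u_m=f(u_m)(u_m-u)+f(u_m)u$ and integrate over $Q_T$. For the first piece, H\"older's inequality gives $\bigl|\int_{Q_T} f(u_m)(u_m-u)\bigr|\le \|f(u_m)\|_{L^{q'}(Q_T)}\,\|u_m-u\|_{L^{q}(Q_T)}$, which tends to $0$ since the first factor is bounded and the second tends to $0$. For the second piece, $\int_{Q_T} f(u_m)u\to\int_{Q_T} f(u)u$ because $f(u_m)\to f(u)$ in $L^{q'}(Q_T)$ while $u$ is a fixed element of $L^{q}(Q_T)$. Adding the two limits proves Claim \ref{CLM}.

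The main obstacle is the loss, in the non-radial setting, of the uniform $C(\overline{\Omega})$-convergence that trivialized the analogous step \eqref{ERD} in the proof of Theorem \ref{TH1}: here one only has convergence in $C([0,T],L^{\kappa}(\Omega))$ for $\kappa<p^*$. The delicate point is therefore the exponent bookkeeping — one must use precisely that the subcriticality $q<1^*\le p^*$ yields the strong $L^{q}(Q_T)$-convergence of $(u_m)$, and that $q'$ is exactly the exponent for which $(f_3)$ forces $(f(u_m))$ to be bounded and, via equi-integrability, convergent in $L^{q'}(Q_T)$, so that $\int_{Q_T} f(u_m)u_m$ becomes a pairing of two sequences converging in mutually conjugate Lebesgue spaces.
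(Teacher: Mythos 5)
Your argument is correct, but it takes a genuinely different route from the paper's. The paper proves Claim \ref{CLM} by applying Vitali's convergence theorem directly to the product $f(u_m)u_m$ on $\Omega\times[0,T]$: using $(f_3)$, H\"older's inequality and the a priori bounds \eqref{EST1} and \eqref{ET1}, it shows that $\int_{V}f(u_m)u_m\,dx\,dt$ is controlled uniformly in $m$ by powers of $|V|$ for every measurable $V\subset\Omega\times[0,T]$ (equi-integrability of the product), and then combines this with the pointwise convergence $f(u_m)u_m\to f(u)u$ a.e.\ coming from \eqref{AL1}. You instead turn the integral into a duality pairing: the strong convergence of $(u_m)$ in $L^{q}(\Omega\times(0,T))$ — obtained from \eqref{T11} after the correct verification that $q<1^{*}<p^{*}$ — together with the continuity of the superposition operator $v\mapsto f(v)$ from $L^{q}$ into the conjugate space $L^{q'}$ (forced by $(f_3)$ and the identity $(q-1)q'=q$) gives $f(u_m)\to f(u)$ strongly in $L^{q'}(\Omega\times(0,T))$, and H\"older's inequality finishes the argument. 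Both proofs rest on the same two pillars, namely the subcritical growth $(f_3)$ and the uniform energy bounds, and your generalized dominated convergence step is essentially Vitali's theorem in disguise. The paper's version is marginally more economical in that it only uses the a.e.\ convergence \eqref{AL1} rather than the stronger $C([0,T],L^{q}(\Omega))$ convergence, while your version isolates a reusable statement (Nemytskii continuity between conjugate Lebesgue spaces) and avoids having to estimate the product over arbitrary measurable sets. Your exponent bookkeeping is exactly what is needed and the proof is complete.
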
  
		Indeed, from $(f_{3})$, (\ref{EST1}) and using H\"older's inequality for each measurable set $V\subset \Omega \times [0,T]$, we have 
		$$ \int_{V}f(u_{m})u_{m}\,dxdt\leq CT\left(3|V|^{1/2}\|u_{0}\|_{2}+|V|^{\frac{p^{*}}{p^{*}-q}}\left(\frac{\theta p d_{p}}{\theta-p}\right)^{q/p}\right)$$
		for $m$ large enough. In view of (\ref{AL1}), we have $f(u_{m}(x,t))u_{m}(x,t)\rightarrow f(u(x,t))u(x,t)$ a.e. in $\Omega \times [0, T]$. Thus, the desired result follows from the Vitali's convergence theorem.
		
		Replacing $\eqref{Con}$ in $\eqref{Sl1}$ yields
		\begin{equation}\label{SSl1}
			\int_{\Omega} u_{t}(t)v\,dx+\int_{ \Omega} |\nabla u(t)|^{p-2}\nabla u(t).\nabla v\,dx =\int_{\Omega} f(u(t))v\,dx,\; \text{a.e. in}\;(0, T), \;\;\forall  v\in W_{0}^{1, p}(\Omega)\cap L^{2}(\Omega).
		\end{equation}
		As $T>0$ is arbitrary, it follows that 
		\begin{equation}\label{SSl*1}
			\int_{\Omega} u_{t}(t)v\,dx+\int_{ \Omega} |\nabla u(t)|^{p-2}\nabla u(t).\nabla v\,dx =\int_{\Omega} f(u(t))v\,dx,\; \text{a.e. in}\; (0,+\infty),
		\end{equation}
		for all $v\in W_{0}^{1, p}(\Omega)\cap L^{2}(\Omega)$.
		
	\end{description}

	\subsection{Existence of solution for the original problem}
	
	In what follows, we set $p_m \to 1^+$ and $u_m=u_{p_m}$ the solution obtained in the last version, that is,  
	$$
	u_{m}\in  L^{\infty}(0,+\infty;W_{0}^{1,p_m}(\Omega)\cap L^{2}(\Omega)),
	$$
	$$
	u_{mt} \in L^{2}(0,+\infty;L^{2}(\Omega)),
	$$
	and 
	\begin{equation}\label{SSl*21}
		\int_{\Omega} u_{mt}(t)v\,dx+\int_{ \Omega} |\nabla u_m(t)|^{p-2}\nabla u_m(t).\nabla v\,dx =\int_{\Omega} f(u_m(t))v\,dx,\; \text{a.e. in}\; (0,+\infty),
	\end{equation}
	for all $v\in W_{0}^{1, p_m}(\Omega)\cap L^{2}(\Omega)$.
	
	Moreover, we also have 
	\begin{equation}\label{ESTZe11}
		\int_{ \Omega} |\nabla u_{m}(t)|^{p_m}\,dx \leq \frac{\theta p_md_{p_m}}{\theta -p_m}, \;\; \; \int_{0}^{t}\|u_{ms}(s)\|^2_{2}\,ds \leq  d_{p_m},\;\;\;\|u_{m}(t)\|_{2}\leq \|u_{0}\|
		,\;\; t\in [0,+\infty).
	\end{equation}
	Since $\theta>1$, $p_m \to 1^+$ and $(d_{p_m})$ is a bounded sequence by Lemma \ref{boundednessdp}, there is $C_1>0$ such that 
	\begin{equation}\label{ESTZe21}
		\int_{ \Omega} |\nabla u_{m}(t)|^{p_m}\,dx< C_1, \;\; \; \int_{0}^{t}\|u_{ms}(s)\|^2_{2}\,ds< C_1 ,\;\;\;\|u_{m}(t)\|_{2}\leq \|u_{0}\|
		,\;\;, \quad \forall m \in \mathbb{N}.
	\end{equation}
	By Young inequality, 
	$$
	\int_{\Omega}|\nabla u_m(t)|\,dx \leq \frac{1}{p_m} \int_{\Omega}|\nabla u_m(t)|^{p_m}\,dx +\frac{p_m-1}{p_m}|\Omega|, \;\;t\in [0,\infty), \quad \forall m \in \mathbb{N}.
	$$
	Hence, there is $C_2>0$ such that 
	\begin{equation}\label{ESTZe111}
		\int_{ \Omega} |\nabla u_{m}(t)|\,dx< C_2, \;\;t\in [0,+\infty), \quad \forall m \in \mathbb{N}. 
	\end{equation}

	\begin{claim} \label{limitationZ1}  $(u_m)$ is a bounded sequence in $L^{\infty}(0,+\infty;L^{1}(\Omega))$. Hence, $(u_m)$ is a bounded sequence in $L^{\infty}(0,+\infty;BV(\Omega))$ and $(f(u_m))$ is a bounded sequence in $L^{\infty}(0,+\infty; L^{2}(\Omega))$.
	\end{claim}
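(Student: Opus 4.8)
The plan is to establish the three boundedness assertions in the order stated, each feeding into the next, using only the uniform-in-$m$ a priori estimates (\ref{ESTZe21}) and (\ref{ESTZe111}) already at hand together with the subcritical growth condition $(f_3)$. The argument is the non-radial counterpart of the chain of estimates carried out between (\ref{L1}) and (\ref{LIm*}) in the proof of Theorem~\ref{TH1}, the only substantive new ingredient being that here the control of $f(u_m)$ relies on $(f_3)$ rather than on the $L^\infty$-bound supplied by $BV_{rad}(\Omega)$. First I would deduce the $L^1$-bound directly from the $L^2$-control $\|u_m(t)\|_2\le\|u_0\|_2$ in (\ref{ESTZe21}): since $\Omega$ is bounded, Hölder's inequality gives
$$
\|u_m(t)\|_1\le|\Omega|^{1/2}\|u_m(t)\|_2\le|\Omega|^{1/2}\|u_0\|_2,\qquad\forall\,t\ge0,\ \forall\,m,
$$
so $(u_m)$ is bounded in $L^{\infty}(0,+\infty;L^{1}(\Omega))$.

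Next, recalling that the norm on $BV(\Omega)$ is $\|u\|_{BV(\Omega)}=\int_\Omega|Du|+\|u\|_1$ and that each $u_m(t)\in W_0^{1,p_m}(\Omega)$, so that $\int_\Omega|Du_m(t)|=\int_\Omega|\nabla u_m(t)|\,dx$, I would combine the gradient bound (\ref{ESTZe111}) with the $L^1$-bound just obtained to get
$$
\|u_m(t)\|_{BV(\Omega)}\le C_2+|\Omega|^{1/2}\|u_0\|_2,\qquad\forall\,t\ge0,\ \forall\,m,
$$
which yields the boundedness of $(u_m)$ in $L^{\infty}(0,+\infty;BV(\Omega))$.

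Finally, for the $L^2$-bound on $f(u_m)$ I would invoke $(f_3)$ to write $|f(u_m(t))|^2\le 2C^2\bigl(1+|u_m(t)|^{2(q-1)}\bigr)$ and integrate over $\Omega$. The one point that requires care—and the only place where the subcriticality in $(f_3)$ is genuinely used—is that $q<1^*=N/(N-1)\le2$ for $N\ge2$, hence $2(q-1)<2$. This lets me control the middle term by Hölder's inequality against the conjugate exponent $1/(q-1)$, using purely the $L^2$-estimate on $u_m$:
$$
\int_\Omega|u_m(t)|^{2(q-1)}\,dx\le|\Omega|^{2-q}\,\|u_m(t)\|_2^{2(q-1)}\le|\Omega|^{2-q}\,\|u_0\|_2^{2(q-1)}.
$$
Consequently $\int_\Omega|f(u_m(t))|^2\,dx\le 2C^2\bigl(|\Omega|+|\Omega|^{2-q}\|u_0\|_2^{2(q-1)}\bigr)$ uniformly in $m$ and $t$, which is exactly estimate (\ref{K1}) and shows that $(f(u_m))$ is bounded in $L^{\infty}(0,+\infty;L^{2}(\Omega))$.

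The whole claim is essentially a bookkeeping exercise layered on top of the uniform energy estimates, and I anticipate no real obstacle: the only step with any content is verifying that the exponent $2(q-1)$ remains strictly below $2$, which is precisely the role of the subcritical hypothesis $q<1^*$ and the single ingredient that would break down were $(f_3)$ dropped, as in the radial theorem.
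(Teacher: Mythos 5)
Your proposal is correct and follows essentially the same route as the paper: Hölder for the $L^{1}$-bound from the uniform $L^{2}$-estimate, addition of the uniform gradient bound to get the $BV$-bound, and the growth condition $(f_3)$ with $2(q-1)<2$ plus Hölder to reproduce exactly the paper's estimate (\ref{K1}) for the $L^{2}$-bound on $f(u_m)$. The only difference is cosmetic (the paper cites (\ref{K1}) established earlier rather than rederiving it), and your version of the first Hölder step is in fact the correctly stated one.
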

	Indeed, from  H\"older's inequality and (\ref{ESTZe21}) we get
	\begin{equation}\label{L11}
		\|u_{m}(t)\|_{1}\leq |\Omega|^{1/2}\|u_{m}\|^{2}_{2} \leq |\Omega|^{1/2}\|u_{0}\|^{2}_{2},\quad \forall t \in (0,+\infty) \quad \mbox{and} \quad \forall m \in\mathbb{N}
	\end{equation}
	showing that $(u_m)$ is a bounded sequence in $L^{\infty}(0,+\infty;L^{1}(\Omega))$. Recalling that the usual norm in $BV(\Omega)$ is 
	$$
	\|u\|=\int_{\Omega}|Du|+|u|_{1}, \quad \forall u \in BV(\Omega),
	$$
	it follows from (\ref{ESTZe11}) and (\ref{L11}) that there is $C_4>0$ such that
	$$
	\int_{\Omega}|\nabla u_m|\,dx+\int_{\Omega}|u_m|\,dx\leq C_4, \quad \forall t \in (0,+\infty)
	$$
	showing that $(u_m)$ is bounded in $L^{\infty}(0,T;BV(\Omega))$.
	Since $2q-2<2$, the second part of the claim  follows directly from (\ref{K1}). Hence, $(f(u_m))$ is a bounded sequence in $L^{\infty}(0,+\infty; L^{2}(\Omega))$.
	
	The last claim permits to deduce that
	\begin{equation}\label{LIm*1}
		\left\{\begin{array}{lcl}
			u_{m} \stackrel{*}{\rightharpoonup} u & \text{in}& L^{\infty}(0,T;BV(\Omega)\cap L^{2}(\Omega)),\\
			u_{mt}\rightharpoonup u_{t} &\text{in} & L^{2}(0,T;L^{2}(\Omega)),\\
		\end{array}\right.
	\end{equation}
	
	By \cite[Corollary 4, p. 85]{JS} for all $T> 0$ we have
	
	\begin{equation}\label{T1Ze1}
		u_{m}\rightarrow u\;\;\text{in}\; C([0,T], L^{\kappa}(\Omega)),  \;\forall \kappa\in [1, 1^{*}).
	\end{equation}
	and
	\begin{equation}\label{ALZe1}
		u_{m}(x,t)\rightarrow u(x,t) \;\;\text{a.e.}\; x\in \Omega, \;\forall t\geq0.
	\end{equation}
	In particular $u_m(t) \to u(t)$ in $L^{1}(\Omega)$ for all $t \in [0,+\infty)$. From this, we have that $u(t) \in BV(\Omega)$ for all $t \in [0,+\infty)$ and 
	$$
	\liminf_{m \to +\infty}\int_{\Omega}\varphi| \nabla u_m(t)|\,dx \geq  \int_{\Omega}\varphi|Du(t)|\,dx, \quad \forall 0 \leq \varphi \in C_{0}^{1}(\Omega) \;\;\mbox{and} \;\;  \;\; t \in [0,+\infty). \;\; (\mbox{see \cite{Ambrosio}}\;\; )
	$$
	and
	$$
	\liminf_{m \to +\infty}\int_{\Omega}| \nabla u_m(t)|^{p_m}\,dx \geq \liminf_{m \to +\infty}\left(\int_{\Omega}| \nabla u_m(t)|\,dx+\int_{\partial\Omega} |u_m|\, d\mathcal{H}^{N-1} \right) \geq \|u(t)\|, \quad \forall t \in [0, +\infty)
	$$
	Moreover, from (\ref{T1Ze1}),
	\begin{equation}\label{RS1Ze1}
		f(u_{m}) \stackrel{*}{\rightharpoonup} f(u)\;\;\text{in}\; L^{\infty}(0, +\infty; L^{2}(\Omega)).
	\end{equation}
	
	\begin{claim} \label{newclaim1} There are a vector field $z \in L^{\infty}(0,+\infty; L^{\infty}(\Omega))$ with $\mbox{div} \, z(t) \in L^{2}(\Omega)$  such that, up to subsequence,
		$$
		|\nabla u_m|^{p_m-2}\nabla u_m \rightharpoonup z \quad \mbox{in} \quad  L^{s}(0,T;L^{s}(\Omega)), \quad \forall s > 1 \quad \mbox{and} \quad \forall T>0, \leqno{(i)}
		$$	
		$$
		|z(t)|_\infty \leq 1,\quad \forall t>0, \leqno{(ii)}
		$$
		$$	
		(z(t),Du(t))=|Du(t)|, \quad \mbox{as measures on} \quad \Omega, \; \text{a.e. in}\; (0,+\infty), \leqno{(iii)}
		$$
		$$ [z(t), \nu]\in \text{sign}(-u(t))\;\;\mathcal{H}^{N-1}-\text{a.e}\;\;\text{on}\;\partial \Omega, \leqno{(iv)}
		$$
		and
		$$
		u_t(t)	-div \, z(t)=f(u(t)), \quad \mbox{in} \quad \mathcal{D}'(\Omega), \; \text{a.e. in}\; (0,+\infty). \leqno{(v)}
		$$
	\end{claim}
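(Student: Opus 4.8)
The plan is to mirror the argument already carried out for Claim~\ref{newclaim} in the radial setting, the only substantive change being that the convergence of the reaction terms must now be extracted from the subcritical growth $(f_3)$ rather than from an $L^\infty$ bound, since in a general Lipschitz domain we only control $(u_m)$ through $BV(\Omega)\hookrightarrow L^\kappa(\Omega)$ for $\kappa<1^*$.

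For $(i)$ and $(ii)$ I would note that $|\nabla u_m|^{p_m-2}\nabla u_m=|\nabla u_m|^{p_m-1}$, so for a fixed $s>1$ and $m$ large enough that $s(p_m-1)<p_m$, H\"older's inequality together with the uniform bound $\int_\Omega|\nabla u_m(t)|^{p_m}\,dx<C_1$ from \eqref{ESTZe21} gives a bound of order $|\Omega|^{1/s-(p_m-1)/p_m}T^{1/s}$ on the $L^s(0,T;L^s(\Omega))$ norm. Reflexivity of $L^s(0,T;L^s(\Omega))$ and a diagonal extraction over $s$ produce a limit $z$ satisfying $(i)$; letting $s\to\infty$ in the resulting estimate $|z|_{L^s(0,T;L^s(\Omega))}\le(|\Omega|T)^{1/s}$ yields $|z(t)|_\infty\le1$, i.e.\ $(ii)$. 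To obtain $(v)$ I would integrate the approximate identity \eqref{SSl*21} against $\varphi\in C_0^1(\Omega)$ over $(0,T)$ and pass to the limit using the second line of \eqref{LIm*1} for $u_{mt}$, the weak limit $(i)$ for the diffusion term, and the weak-$*$ convergence \eqref{RS1Ze1} for $f(u_m)$; since $\varphi$ and $T$ are arbitrary this gives $u_t-\mbox{div}\,z(t)=f(u(t))$ in $\mathcal{D}'(\Omega)$ a.e., and in particular $\mbox{div}\,z(t)\in L^2(\Omega)$.

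The heart of the matter, and the step I expect to be hardest, is the identification of the Anzellotti pairing in $(iii)$ and the boundary condition $(iv)$, following the scheme of \cite{sergio}. For $(iii)$ I would test with $u_m\varphi$, $0\le\varphi\in C_0^1(\Omega)$; the two delicate terms are $\int_0^T\int_\Omega|\nabla u_m|^{p_m}\varphi$ and $\int_0^T\int_\Omega f(u_m)u_m\varphi$. The first is handled by the lower-semicontinuity inequality $\liminf_m\int_\Omega|\nabla u_m(t)|^{p_m}\varphi\ge\int_\Omega\varphi|Du(t)|$ established above, and the second by Claim~\ref{CLM}, namely Vitali's theorem combined with $(f_3)$ and \eqref{ALZe1}; this is precisely where the radial $L^\infty$ argument is replaced by the subcritical estimate. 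The outcome is $-\int_\Omega u\,\mbox{div}\,z(t)\,dx-\int_\Omega u z(t)\nabla\varphi\,dx\ge\int_\Omega\varphi|Du(t)|$ a.e., and coupling this with the reverse inequality furnished by $|z(t)|_\infty\le1$ and the definition of $(z(t),Du(t))$ yields $(iii)$.

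For $(iv)$ I would test with $u_m-\varphi$, pass to the limit with the help of the bound $\liminf_m\int_\Omega|\nabla u_m(t)|^{p_m}\,dx\ge\|u(t)\|$ recorded above, and invoke the Green formula \eqref{Gree} to reduce the assertion to $\int_{\partial\Omega}(|u(t)|+u(t)[z(t),\nu])\,d\mathcal{H}^{N-1}\le0$ a.e.; since $\|[z(t),\nu]\|_\infty\le1$, this forces $[z(t),\nu]\in\mbox{sign}(-u(t))$ $\mathcal{H}^{N-1}$-a.e.\ on $\partial\Omega$. Apart from these reaction-term estimates, every passage to the limit is formally identical to the radial case, so the main care must be devoted to justifying the weak convergence of $f(u_m)u_m$ on arbitrary measurable subsets via the subcritical bound, which is exactly what Claim~\ref{CLM} provides.
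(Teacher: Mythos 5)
Your proposal is correct and follows essentially the same route as the paper, which simply declares that the claim "follows as in the proof of Claim \ref{newclaim}": you reproduce the radial argument for $(i)$, $(ii)$, $(v)$, the Anzellotti pairing identification $(iii)$, and the boundary condition $(iv)$ via Green's formula. You also correctly isolate the one genuine adaptation needed in the non-radial setting, namely replacing the $L^\infty$/uniform-convergence treatment of $f(u_m)$ and $f(u_m)u_m$ by the subcritical estimate of $(f_3)$ together with Vitali's theorem, which is exactly the content of Claim \ref{CLM} and the weak-$*$ limit (\ref{RS1Ze1}) already established in the paper.
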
	
	Now the proof of this claim follows as in the proof of Claim \ref{newclaim}. Therefore the proof of Theorem \ref{TH2} is now complete.


\begin{thebibliography}{20}
		
		
		
		\bibitem{Ambrosio} {\sc L. Ambrosio, N. Fusco, D. Pallara,} {\it Functions of Bounded Variation and Free Discontinuity Problems,} in: Oxford
		Mathematical Monographs, The Clarendon Press, Oxford University Press, New York, ISBN: 0-19-850245-1, 2000,
		xviii+434 pp.
		
		
		\bibitem{Alves0} {\sc C.O. Alves,}{\it \, A Berestycki-Lions type result for a class of problems involving the  1-Laplacian operator}, \, To appear in Comm. Contem. Mathematics. 
		
		\bibitem{AlvesPimenta} {\sc C.O. Alves and M.T.O. Pimenta}, {\it On existence and concentration of solutions to a class of quasilinear problems involving the $1-$Laplace operator}, Calc. Var.  Partial Differential Equations (2017) 56:143 DOI 10.1007/s00526-017-1236-3.
		
		
		
		
		
		
		
		\bibitem{Andreu} {\sc F. Andreu, C. Ballester, V. Caselles and J. M. Maz\'on}, {\it The Dirichlet problem
			for the total variation flow,} J. Funct. Anal., 180, 347-403 (2001).
		
		\bibitem{Andreu3} {\sc F. Andreu, C. Ballester, V. Caselles, and J. M. Maz\'on}, {\it  Minimizing total variation flow,}
		Differential Integral Equations 14 (2001), 321–360.
		
		
		\bibitem{Andreu2}{\sc F. Andreu,  V. Caselles. J.I. D\'iaz and J. M. Maz\'on}, {\it Some Qualitative Properties for the Total Variation Flow} J. Funct. Anal., 188, 516–547 (2002).
		
		
		\bibitem{Anz} {\sc G. Anzellotti,} {\it  Pairings Between Measures and Bounded Functions and Compensated Compactness}, Ann. Mat. Pura Appl.135 (1) (1983) 293–318.
		
		
		\bibitem{Abm} {\sc H. Attouch, G. Buttazzo and G. Michaille}, {\it Variational analysis in Sobolev and BV spaces: applications to PDEs and optimization}, MPS-SIAM, Philadelphia (2006).
		
		\bibitem{BCN} {\sc G. Bellettini, V. Caselles, M. Novaga}, {\it explicit solutions of the eigenvalue problem $div\left(\frac{\nabla u}{|\nabla u|}\right)=u$ in $\mathbb{R}^2$}, SIAM J. Math. Anal. 36, No. 4, 1095-1129 (2005).
		
		%\bibitem{CazenaveLions} {\sc T. Cazenave and P. L. Lions}, Solutions globales d'quations de la chaleur semi linaires, Comm. Partial.
		%Differ. Equations, 9 (1984), 955-978. 
		
		
		\bibitem{Chang2} {\sc K. Chang}, {\it The spectrum of the 1-Laplace operator}, Commun. Contemp. Math., 9, No. 4, 515 - 543 (2009).
		
		
		\bibitem{Crandall} {\sc M. G. Crandall and T. M. Liggett,}{\it  Generation of semigroups of nonlinear transformations
			on general Banach spaces,} Amer. J. Math. 93 (1971), 265-298.
		
		
		
		
		\bibitem{DegiovanniMagrone} {\sc M. Degiovanni and P. Magrone}, {\it Linking solutions for quasilinear equations at critical growth involving the $1-$Laplace operator}, Calc. Var. Partial Differential Equations, 36, 591 - 609 (2009).
		
		
		
		\bibitem{Demengel1} {\sc F. Demengel}, {\it \, On some nonlinear partial differential equations involving the 1-Laplacian and critical Sobolev exponent}, ESAIM Control Optim. Calc. Var. 4, 667-686 (1999)
		
		\bibitem{Demengel2} {\sc F. Demengel}, {\it \, Functions locally almost 1-harmonic}, Appl. Anal. 83(9), 865-896 (2004)
		
		
		\bibitem{Evn} {\sc L. C. Evans and R. F. Gariepy}, {\it  Measure Theory and Fine Properties of Functions, Studies in Advanced Math}. CRC Press, Boca Raton, FL, 1992.
		
		
		\bibitem{FigueiredoPimenta3} {\sc G.M. Figueiredo and M.T.O. Pimenta}, {\it Existence of bounded variation solutions for a $1-$Laplacian problem with vanishing potentials}, J.Math. Anal. Appl. 459 (2018), 861-878.
		
		\bibitem{FigueiredoPimenta4} {\sc G.M. Figueiredo and M.T.O. Pimenta},{\it \, Nodal solutions to quasilinear elliptic problems involving the 1-laplacian operator via variational and approximation methods}. Preprint.
		
		\bibitem{FigueiredoPimenta1} {\sc G.M. Figueiredo and M.T.O. Pimenta}, {\it Nehari method for locally Lipschitz functionals with examples in problems in the space of bounded variation functions},  Nonlinear Differ. Equ. Appl. 25, 47 (2018). https://doi.org/10.1007/s00030-018-0538-2
		
		
		\bibitem{GM1} {\sc G.M. Figueiredo and M.T.O. Pimenta}, {\it Strauss and Lions type results in $BV (\mathbb{R}^{N})$ with an application to an 1-Laplacian problem}, Milan J. Math., 86, No. 1, 15-30 (2018).
		
		\bibitem{HardtZhou} {\sc R. Hardt and X. Zhou}, {\it An evolution problem for linear growth functionals,} Comm.
		Partial Differential Equations 19 (1984), 1879–1907.
		
		
		\bibitem {DH1}{\sc  D. Hauer and J. M. Maz\'on}. {\it Regularizing effects of homogeneous evolution equations: the case of homogeneity
			order zero.} Journal of Evolution Equations, pages 1–32, 2019.
		
		
		\bibitem{Kawohl} {\sc B. Kawohl and F. Schuricht }, {\it Dirichlet problems for the $1-$Laplace operator, including the eigenvalue problem}, Commun. Contemp. Math., 9, No. 4, 525 - 543 (2007).
		
		\bibitem{Kavian} {\sc O. Kavian}, {\it Introduction \`a la Th\'eorie des Points Critiques et Applications aux Probl\`emes Elliptiques}, Springer, Heidelberg, 1993.
		
		\bibitem{SW}{\sc Leon and C. Webler},{\it Global existence and uniqueness for the inhomogeneous 1-Laplace evolution equation}, NoDEA Nonlinear Differential Equations Appl. 22(2015), 1213-1246.
		
		\bibitem{L} {\sc J.L. Lions}, {\it Quelques Méthodes de Résolution des Problèmes aux limites non linéaires}. Dounod, Paris (1969).
		
		\bibitem {DH2} {\sc J. M. Maz\'on, J. D. Rossi, and J. Toledo}, {\it Fractional p-Laplacian evolution equations}, J.Math. Pures
		Appl. (9), 105 (2016), pp. 810–844.
		
		\bibitem{MRST} {\sc A. Mercaldo, J.D. Rossi, S. Segura de Le\'on and C. Trombetti}, {\it \, Behaviour of $p$-Laplacian problems with Neumann boundary conditions when $p$ goes to 1}, Comm. Pure. Appl. Anal. 12 (2013), 253-267. 
		
		\bibitem{MST}{\sc A. Mercaldo, S. Segura de Le\'on and C. Trombetti}, On the behaviour of the solutions to $p$-Laplacian equation as $p$ goes to 1. Publ. Mat. 52 (2008), 377-411
		
		\bibitem{sergio} {\sc A. Molino Salas and S. Segura de Le\'on}, {\it \, Elliptic equations involving the 1-Laplacian and a subcritical source term}, Nonlinear Anal,  168 (2018) 50–66
		
		\bibitem{Pimenta2} {\sc J. C. Ortiz Chata and M. T. O. Pimenta}, {\it \, A Berestycki-Lions' type result to a quasilinear elliptic problem involving the 1-laplacian operator.} To appear in JMAA ( doi 10.1016/j.jmaa.2021.125074 ).
		
		\bibitem{JAM} {\sc J.C. Robinson,}  {\it Infinite-dimensional Dynamical Systems an Introduction to Dissipative Parabolic PDEs
			and the Theory of Global Attractors,} Cambridge University Press, Cambridge, 2001.
		
		\bibitem{Squassina} {\sc M. Squassina}, {\it On Palais’ principle for non-smooth functionals}, Nonlinear Analysis 74 (2011) 3786–3804.
		
		
		\bibitem{JS} {\sc J. Simon}, {\it Compact sets in the space $L^{p}(0,T, B)$ }, Ann. Math. Pura. Appl. (4) 146(1987), 65-96.
		
		\bibitem{RE} {\sc E. R. Showalter}, {\it Monotone operators in Banach spaces and nonlinear partial differential equations }(Mathematical surveys and monographs vol 49) (Providence, RI : American Mathematical Society).
		
		
		
		\bibitem{Zh}{\sc S. Zheng,} {\it Nonlinear evolution equations}, Chapman \& Hall/CRC Monographs and surveys in Pure and Applied Mathematics, 133, Chapman \& Hall/CRC, Boca Raton, FL. 2004.
		
		
		\bibitem{Ziem} {\sc W. P. Ziemer}, {\it Weakly Differentiable Functions}, GTM 120, Springer-Verlag, Berlin, 1989.
		
		
		
		
	\end{thebibliography}
\end{document}